\documentclass[reqno]{amsart}

%
\usepackage{amsmath,amssymb,amsthm}
\usepackage{mathtools}\mathtoolsset{showonlyrefs}
\usepackage{hyperref}
\newcommand{\Id}{\mathrm{Id}}
\newcommand{\Wr}{\mathrm{Wr}}
\newcommand{\pd}{\partial}
\newcommand{\Rset}{\mathbb{R}}
\newcommand{\Nset}{\mathbb{N}}
\newcommand{\Zset}{\mathbb{Z}}
\newcommand{\Tset}{\mathbb{T}}

\newcommand{\Bt}{\tilde{\mathcal{B}}}
\newcommand{\Bh}{\hat{\mathcal{B}}}
\newcommand{\cI}{\mathcal{I}}
\newcommand{\cO}{\mathcal{O}}
\newcommand{\ep}{\varepsilon}
\newtheorem{theorem}{Theorem}[section]
\newtheorem{proposition}[theorem]{Proposition}
\newtheorem{lemma}[theorem]{Lemma}
\newtheorem{corollary}[theorem]{Corollary}
\theoremstyle{definition}
\newtheorem{definition}[theorem]{Definition}
\theoremstyle{remark}
\newtheorem{remark}[theorem]{Remark}
\numberwithin{equation}{section}
%
%

\begin{document}

\title{Darboux transformations from the Appell-Lauricella operator}

\author[A. M. Delgado]{Antonia M. Delgado$^1$}
\address[A. M. Delgado]{Dpto. Matem\'atica Aplicada, Universidad de Granada, 18071 Granada, Spain}
\email{amdelgado@ugr.es}

\author[L. Fern\'andez]{Lidia Fern\'andez$^1$}
\address[L. Fern\'andez]{Dpto. Matem\'atica Aplicada, Universidad de Granada, 18071 Granada, Spain}
\email{lidiafr@ugr.es}

\author[P. Iliev]{Plamen Iliev$^2$}
\address[P. Iliev]{School of Mathematics, Georgia Institute of Technology, Atlanta,  GA 30332-0160, USA}
\email{iliev@math.gatech.edu}
\thanks{$^1$AMD and LF are partially supported by Ministerio
de Ciencia, Innovaci\'on y Universidades of Spain (MICINN) and European Regional Development Fund (ERDF) through the grant PGC2018-094932-B-I00, and by Research Group FQM-384 from Junta de Andaluc\'ia.}
\thanks{$^2$PI is partially supported by Simons Foundation Grant \#635462.}

\date{September 17, 2019}

\subjclass[2010]{13N10; 33C65; 42C05; 47F05; 35P05}

\keywords{Darboux transformations; commuting partial differential operators;  quantum integrability;  Jacobi polynomials;  Dirichlet distribution}

\begin{abstract}
We define two isomorphic algebras of differential operators: the first algebra consists of ordinary differential operators and contains the hypergeometric differential operator, while the second one consists of partial differential operators in $d$ variables and contains the Appell-Lauricella partial differential operator. Using this isomorphism, we construct partial differential operators which are Darboux transformations from polynomials of the Appell-Lauricella operator. We show that these operators can be embedded into commutative algebras of partial differential operators, containing $d$ mutually commuting and algebraically independent partial differential operators, which can be considered as quantum completely integrable systems. Moreover, these algebras can be simultaneously diagonalized on the space of polynomials leading to extensions of the Jacobi polynomials orthogonal with respect to the Dirichlet distribution on the simplex.
\end{abstract}

\maketitle


\section{Introduction}\label{se1}

The famous hypergeometric equation with three regular singular points at $0$, $1$ and $\infty$ can be written as
$$t(1-t)\frac{d^2 y}{d t^2}+[c-(a+b+1)t]\frac{d y}{d t}-aby=0,$$
where $a,b,c$ are parameters. Its analytic solution at $t=0$ is given by the Gauss hypergeometric function
\begin{equation*}
{}_2F_1\left( \begin{matrix}a,\,b \\ c \end{matrix} \,;\, t \right)=\sum_{k=0}^{\infty}\frac{(a)_k(b)_k}{k!\,(c)_k}t^k,
\end{equation*}
where $(a)_k$  denotes the Pochhammer symbol:
$$(a)_0=1\qquad \text{ and }\qquad (a)_k=a(a+1)\cdots(a+k-1)\text{ for } k\in\Nset.$$
If we set $a=-n$, $b=n+\alpha+\beta+1$, $c=\beta+1$, where $n\in\Nset_0$, the hypergeometric functions reduce to the Jacobi polynomials $p_n^{\alpha,\beta}(t) $ which are orthogonal with respect to the beta distribution. With this reparametrization, the hypergeometric equation can be rewritten as the eigenvalue equation
\begin{equation}\label{Jac_s}
\mathcal{M}_1^{\alpha,\beta} [ p_n^{\alpha,\beta}(t) ]= -n(n+\alpha+\beta+1)  p_n^{\alpha,\beta}(t),
\end{equation}
where $\mathcal{M}_1^{\alpha,\beta}$ is the hypergeometric (Jacobi) operator
\begin{equation}\label{Jacobi_op}
\mathcal{M}_1^{\alpha,\beta} = t(1-t)\partial_t^2 + [(\beta+1)-(\alpha+\beta+2)t]\partial_t.
\end{equation}
The spectral equation \eqref{Jac_s} plays a crucial role in numerous applications of the Jacobi polynomials in different branches of mathematics and physics.
Higher-order differential equations extending~\eqref{Jac_s} were built by Gr\"unbaum and Yakimov  \cite{GY02} by applying the general bispectral Darboux methods developed in \cite{BHY}. A different approach to these results was proposed in \cite{I11}, and it was used there to construct commutative algebras of partial differential operators invariant under rotations which are Darboux transformations from the partial differential operator of the classical orthogonal polynomials on the ball. The aim of this work is to construct analogous commutative algebras of partial differential operators which are Darboux transformations from the Appell-Lauricella operator, which we introduce below.

Multivariable extensions of the hypergeometric function ${}_2F_1$ have been developed by Appell \cite{A1881,A1882} in dimension $2$, and by Lauricella \cite{L1893} in arbitrary dimension. In particular, the Lauricella function $F_A$ defined by the equation
\begin{equation}\label{Lauricella_f}
\begin{split}
&F_A\left( \begin{matrix}a,\,b_1,\dots,b_d \\ c_1,\dots,c_d \end{matrix} \,;\, x_1,\dots,x_d \right)\\
&\qquad\qquad=\sum_{k_1,\dots,k_d=0}^{\infty}\frac{(a)_{k_1+\cdots+k_d}(b_1)_{k_1}\cdots(b_d)_{k_d}}{(c_1)_{k_1}\cdots (c_d)_{k_d}\,k_1!\cdots k_d!}\,x_1^{k_1}\cdots x_d^{k_d},
\end{split}
\end{equation}
can be characterized, up to an overall factor, as the unique analytic solution in a neighborhood of $(x_1,\dots,x_d)=(0,\dots,0)$ of the $d$ partial differential equations
\begin{equation}\label{Lau_PDEs}
\begin{split}
&x_k(1-x_k)\frac{\pd^2 F_A}{\pd x_k^2}-\sum_{\begin{subarray}{c}j=1 \\j\neq k\end{subarray}}^{d}x_kx_j\frac{\pd^2 F_A}{\pd x_k\pd x_j} \\
&\qquad\qquad+(c_k-(a+1+b_k)x_k)\frac{\pd F_A}{\pd x_k}-b_k\sum_{\begin{subarray}{c}j=1 \\j\neq k\end{subarray}}^{d}x_j\frac{\pd F_A}{\pd x_j}-ab_kF_A=0,
\end{split}
\end{equation}
where  $k=1,\dots,d$, see \cite{L1893}.
If we set $b_j=-\eta_j$, where $\eta_j\in\Nset_0$, $c_j=\gamma_j+1$ for $j=1,\dots,d$ and $a=\sum_{j=1}^{d}\eta_j+\sum_{j=1}^{d+1}\gamma_j+d$, it is clear that the Lauricella function in \eqref{Lauricella_f} becomes a polynomial $G_{\eta}(x;\gamma)$ in the variables $x_1,\dots,x_d$ of total degree $\eta_1+\cdots+\eta_d$ depending on the parameters $\gamma_1,\dots,\gamma_{d+1}$. Adding the differential equations \eqref{Lau_PDEs} satisfied by $F_A$, it follows that the polynomials  $G_{\eta}(x;\gamma)$ satisfy the spectral equation:
\begin{equation}\label{Laur_pde}
\mathcal{M}_{d}^\gamma [G_{\eta}(x;\gamma)] = -|\eta|\, (|\eta|+|\gamma| +d) \, G_{\eta}(x;\gamma),
\end{equation}
where $|\eta|=\eta_1++\cdots+\eta_d$, $|\gamma|= \gamma_1+\cdots+\gamma_{d+1}$ and
\begin{equation}\label{Laur_op}
\begin{split}
\mathcal{M}_{d}^\gamma =&
 \sum_{k=1}^d (1-x_k) x_k \partial^2_{x_k} -2 \sum_{1\leqslant k <l\leqslant d} x_k x_l \partial_{x_k}\partial_{x_l}
\\&+\sum_{k=1}^d [(\gamma_k+1) - (|\gamma|+d+1)x_k]\partial_{x_k}.
\end{split}
\end{equation}
In view of this, we refer to the operator in \eqref{Laur_op} as the {\em Appell-Lauricella operator}.
Clearly, when $d=1$, the Appell-Lauricella operator in \eqref{Laur_op} reduces to the hypergeometric operator~\eqref{Jacobi_op}, if we replace $(\alpha,\beta)$ by $(\gamma_2,\gamma_1)$, and thus, the spectral equation~\eqref{Laur_pde} can be considered as a natural multivariable extension of the hypergeometric equation. Moreover, one can show that the operator  $\mathcal{M}_{d}^\gamma$ is self-adjoint with respect to the Dirichlet distribution with parameters $\gamma_1,\dots,\gamma_{d+1}$ and, by an appropriate change of variables and gauge transformation it corresponds to the Hamiltonian for the generic quantum superintegrable system on the sphere \cite{KMT}.

In the present paper we construct Darboux transformations from specific polynomials of the Appell-Lauricella operator. It is perhaps useful to stress that there is an essential difference between univariate and multivariate Darboux transformations. In the univariate case,  up to an overall factor, a differential or difference operator can be uniquely determined from its kernel, which has dimension equal to the order of the operator. This implies that factorizations of univariate operators are essentially parametrized by subspaces of its kernel. The latter fact allows one to construct Darboux transformations by working with the kernel of the intertwining operator. However, all these constructions cannot be applied to partial differential operators, since the kernels are infinite dimensional, and factorizations and intertwining relations are much more subtle.

To overcome this difficulty, we use the ideas and techniques from \cite{I11} and we show that the hypergeometric differential operator \eqref{Jacobi_op} and the Appell-Lauricella operator \eqref{Laur_op} belong to two isomorphic associative algebras, denoted by $\mathfrak{D}_\alpha$ and $\hat{\mathfrak{D}}_\gamma$, respectively. Moreover, when we apply factorizations at one end of the spectrum of the recurrence operator for the Jacobi polynomials, the corresponding Darboux transformations  from the hypergeometric operator and intertwining operators also belong to the $\mathfrak{D}_\alpha$. Thus, we can use the isomorphism between $\mathfrak{D}_\alpha$ and $\hat{\mathfrak{D}}_\gamma$ to obtain multivariable Darboux transformations. The one-dimensional spectral equations depending on a parameter derived in \cite{I11} can be extended to this case to obtain an explicit basis of eigenfunctions for these operators in terms of appropriate extensions of the Jacobi polynomials on the simplex.

The paper is organized as follows. In the next section, we summarize several constructions and theorems established in \cite{I11} needed in the paper, together with some new one-dimensional results. In Section~\ref{se3}, we present the multivariable extensions. In the first subsection, we introduce the  notations and a brief account of the important multivariable ingredients. In particular, we define operators commuting with the Appell-Lauricella operator $\mathcal{M}_{d}^\gamma$, which can be simultaneously diagonalized by the Jacobi polynomials on the simplex, and we explain the connection to the generic superintegrable system on the sphere. We also construct the associative algebra $\hat{\mathfrak{D}}_\gamma$ which contains $\mathcal{M}_{d}^\gamma$ and the isomorphism between $\mathfrak{D}_\alpha$ and $\hat{\mathfrak{D}}_\gamma$. In the second subsection, we discuss extensions of these results for operators  obtained by  Darboux transformations from polynomials of the Appell-Lauricella operator and the corresponding quantum integrable systems. In Section~\ref{se4}, we illustrate the constructions in the paper with an explicit example, and we derive a Sobolev orthogonality relation for the associated polynomials.

\section{Extensions of the Jacobi polynomials and associated commutative algebras of differential operators}\label{se2}

In the  main results of the paper we shall use several constructions and theorems established in \cite{I11}, which we summarize in this section for the convenience of the reader, together with some new one-dimensional  results.

\subsection{Extensions of the Jacobi polynomials and recurrence relations}\label{ss2.1}
Throughout the paper, we shall use the classical Jacobi polynomials normalized as follows
\begin{equation}\label{pjacobi1}
p_n^{\alpha,\beta}(t) = (-1)^n \frac{(\alpha+\beta+1)_n}{n!}\frac{(\beta+1)_n}{(\alpha+1)_n} \, {_2}F_1\left( \begin{matrix} -n,\, n+\alpha+\beta+1 \\ \beta+1 \end{matrix} \,;\, t \right),
\end{equation}
which  are orthogonal with respect to the beta distribution
$$w_{\alpha,\beta}(t) =\frac{\Gamma(\alpha+\beta+2)}{\Gamma(\alpha+1)\Gamma(\beta+1)} (1-t)^\alpha t^\beta$$ on the interval $[0,1]$.
As we noted in the introduction, these polynomials are eigenfunctions of the hypergeometric operator, i.e.
\begin{equation}\label{2.3}
\mathcal{M}_1^{\alpha,\beta} [ p_n^{\alpha,\beta}(t) ]= -n(n+\alpha+\beta+1)  p_n^{\alpha,\beta}(t),
\end{equation}
where
\begin{equation}\label{difeq-1}
\mathcal{M}_1^{\alpha,\beta} = t(1-t)\partial_t^2 + [(\beta+1)-(\alpha+\beta+2)t]\partial_t.
\end{equation}
If we need to specify the variable of differentiation in the differential operator, we shall write $\mathcal{M}_1^{\alpha,\beta}(t)$ and we adopt this convention throughout the paper for all differential operators.
We set
\begin{equation}\label{lambda}
\lambda_n^{\xi} = -n(n+\xi),
\end{equation}
and therefore the eigenvalue in \eqref{2.3} is $\lambda_n^{\alpha+\beta+1}$.
\begin{remark}\label{re2.1}
Using Pfaff's identity \cite[page 79]{AAR} we can rewrite $p_n^{\alpha,\beta}(t) $ as follows:
\begin{equation}\label{2.2}
p_n^{\alpha,\beta}(t) = \frac{(\alpha+\beta+1)_n}{n!}
\, {_2}F_1\left( \begin{matrix} -n,\, n+\alpha+\beta+1 \\ \alpha+1 \end{matrix} \,;\, 1-t \right)
\end{equation}
which shows that $(-1)^np_n^{\alpha,\beta}(t)$ coincides with the Jacobi polynomial defined in \cite[equation~(2.1)]{I11} if we replace $t$ by $1-t$ and exchange the roles of $\alpha$ and $\beta$. Throughout the paper, we use this correspondence when we state results from \cite{I11}.
\end{remark}
It is well known that the Jacobi polynomials solve a {\em bispectral problem\/} in the sense of Duistermaat and Gr\"unbaum \cite{DG86}. Indeed, besides the spectral equation~\eqref{2.3}, the polynomials $p_n^{\alpha,\beta}(t)$ are also eigenfunctions of a difference operator acting on the degree index $n$.  More precisely, if we define the operator
\begin{equation}\label{eq:rec_op}
L_{\alpha,\beta}(n,E_n) = A_nE_n+B_n \, \Id+C_nE_n^{-1},
\end{equation}
where $E_n[f_n]=f_{n+1}$ is the shift operator acting on $n$, and the coefficients are given by
\begin{equation}\label{ABC}
\begin{aligned}
&A_n=\frac{(n+1)(n+\alpha+1)}{(2n+\alpha+\beta+1)(2n+\alpha+\beta+2)},
\\
&C_n=\frac{(n+\beta)(n+\alpha+\beta)}{(2n+\alpha+\beta)(2n+\alpha+\beta+1)},
\\
&B_n=A_n+C_n,
\end{aligned}
\end{equation}
then the Jacobi polynomials satisfy the three-term recurrence relation
\begin{equation}\label{eqn}
L_{\alpha,\beta}(n,E_n)\big[ p_n^{\alpha,\beta}(t)\big]=t\,p_n^{\alpha,\beta}(t).
\end{equation}

We refer to equations \eqref{2.3} and \eqref{eqn} as bispectral equations for the Jacobi polynomials. The connection between bispectrality and the Korteweg-de Vries hierarchy unraveled in \cite{DG86} suggested that soliton techniques can be used to construct extensions of the Jacobi polynomials which are eigenfunctions of higher-order differential and difference operators \cite{G2001,H2001}. In particular, following \cite{GY02}, we can define extensions of the  Jacobi polynomials which also satisfy bispectral equations as follows. We fix $k\in\Nset$ and we take $k$ arbitrary functions
\begin{equation}\label{eq:cond_psi}
\psi_n^{(0)},\dots,\psi_n^{(k-1)}\in (-1)^n \Rset[\lambda_n^{\alpha+\beta+1}],
\end{equation}
that is, each function is a polynomial of $\lambda_n^{\alpha+\beta+1}$ multiplied by $(-1)^n$. Using the collection $\psi=\big\{\psi_n^{(j)}, \  j=0,\dots,k-1\big\}$ we define new polynomials from the Jacobi polynomials {\em via} the formula
\begin{equation}\label{q}
q_n^{\alpha,\beta;\psi}(t) = \Wr_{n} \big(\psi_n^{(0)},\dots,\psi_n^{(k-1)}, p_n^{\alpha,\beta}(t)\big),
\end{equation}
where $\Wr_{n}\big(f_n^{(1)},\dots,f_n^{(k)}\big)=\det\big(f_{n-j+1}^{(i)}\big)_{1\leqslant i,j \leqslant k}$ denotes the discrete Wronskian. We assume that
\begin{equation}\label{tau}
\tau_n=\Wr_n\big((-1)^n\psi_n^{(0)},\dots,(-1)^n\psi_n^{(k-1)}\big)\neq 0,
\end{equation}
which is true for generic elements in $ (-1)^n \Rset[\lambda_n^{\alpha+\beta+1}]$. 
\begin{remark}\label{re:rec}
The importance of the above conditions can be explained as follows. We can consider a natural bi-infinite extension of the recurrence operator $L_{\alpha,\beta}(n,E_n)$ in \eqref{eq:rec_op} which acts on functions defined on $\Zset$. We can do this by making the formal change of variable $n\to n+\ep$ in the coefficients, where $\ep$ is a generic parameter and we denote the resulting operator by $L_{\alpha,\beta}(n+\ep,E_{n})$. With this convention and if \eqref{eq:cond_psi} holds, one can show that there exists a positive integer $m$ such that 
$$(L_{\alpha,\beta}(n+\ep,E_{n}))^m [\psi_{n+\ep}^{(j)}]=0 \qquad \text{ for }\qquad j=0,\dots,k-1,$$
i.e. the functions $\psi_{n+\ep}^{(j)}$ belong to the kernel of the  operator $(L_{\alpha,\beta}(n+\ep,E_{n}))^m$ for a sufficiently large power $m$. This explains the special choice of the functions $\psi_n^{(j)}$ and, in particular, the importance of the normalizing factor $(-1)^n$.  Using this fact, \eqref{eqn} and by considering the limit $\ep\to0$, one can show that the polynomials $q_n^{\alpha,\beta;\psi}(t) $ are eigenfunctions of difference operators acting on the degree index $n$ which extend the recurrence relation \eqref{eqn}. These operators are Darboux transformations from appropriate powers of the operator $L_{\alpha,\beta}(n,E_n)$, see \cite[Section 4]{I14} where this is proved in a more general setting within the context of the Askey-Wilson polynomials. In Section~\ref{ss4.1}, following \cite{GY02,I11}, we explain how we can pick the functions $\psi_n^{(j)}$ so that $q_n^{\alpha,\beta;\psi}(t)$ are eigenfunctions of a second-order difference operator which, by Favard's theorem, means that they will be orthogonal with respect to a nondegenerate moment functional.
\end{remark} 
In the next subsections, we describe the differential spectral equations for  $q_n^{\alpha,\beta;\psi}(t) $ which extend \eqref{2.3} for the Jacobi polynomials, the connection with the Darboux transformation and other results needed for the multivariable extensions.

\subsection{Commutative algebras of differential operators}\label{ss2.2}

In this subsection, we outline the construction of a commutative algebra of differential operators for which the polynomials $q_n^{\alpha,\beta;\psi}(t)$ are eigenfunctions.
We denote by $\mathfrak{D}_\alpha = \mathbb{R}\langle\mathcal{D}_1,\mathcal{D}_2\rangle$ the associative algebra generated by the differential operators $\mathcal{D}_1$ and $\mathcal{D}_2$, defined as
\begin{equation}\label{D1D2}
\mathcal{D}_1 = (t-1)\partial_t \quad \text{ and } \quad \mathcal{D}_2=(1-t)\partial_t^2 - (\alpha+1)\partial_t.
\end{equation}
It is easy to see that
\begin{equation}\label{W1}
[\mathcal{D}_2,\mathcal{D}_1]=\mathcal{D}_2\mathcal{D}_1-\mathcal{D}_1\mathcal{D}_2=\mathcal{D}_2,
\end{equation}
and also that the operator $\mathcal{M}_1^{\alpha,\beta}$ given in \eqref{difeq-1} can be expressed as a polynomial of $\mathcal{D}_1$ and $\mathcal{D}_2$ as follows
\begin{equation}\label{M1D1D2}
\mathcal{M}_1^{\alpha,\beta} =\mathcal{D}_2 -\mathcal{D}_1^2 -(\alpha+\beta+1)\mathcal{D}_1.
\end{equation}

One can show that, up to a simple factor, $\tau_n$ defined in \eqref{tau} is  a polynomial in $\lambda_{n-(k-1)/2}^{\alpha+\beta+1}$.  We denote by $\mathfrak{A}^{\alpha,\beta;\psi}$ the algebra of all polynomials $f$ such that $f\big(\lambda^{\alpha+\beta+1}_{n-k/2}\big)-f\big(\lambda^{\alpha+\beta+1}_{n-k/2-1}\big)$ is divisible by $\tau_{n-1}$ in $\mathbb{R}[n]$, that is,
$$
\mathfrak{A}^{\alpha,\beta;\psi} = \left\{f\in\mathbb{R}[t] \ : \ \frac{f\big(\lambda^{\alpha+\beta+1}_{n-k/2}\big)-f\big(\lambda^{\alpha+\beta+1}_{n-k/2-1}\big)}{\tau_{n-1}} \in \mathbb{R}[n] \right\}.
$$
With the above notations and using the convention in Remark~\ref{re2.1}, we can state Theorem 4.2 in \cite{I11} as follows.

\begin{theorem}\label{th1}
For every $f\in\mathfrak{A}^{\alpha,\beta;\psi}$, there exists a differential operator $\mathcal{B}_f = \mathcal{B}_f(\mathcal{D}_1,\mathcal{D}_2)\in\mathfrak{D}_\alpha$ such that
\begin{equation}\label{kr1}
\mathcal{B}_f \big[q^{\alpha,\beta;\psi}_n(t)\big] = f\big(\lambda^{\alpha+\beta+1}_{n-k/2}\big) q^{\alpha,\beta;\psi}_n(t).
\end{equation}
Moreover, $\mathfrak{D}^{\alpha,\beta;\psi}=\{\mathcal{B}_f \in \mathfrak{D}_\alpha : f\in \mathfrak{A}^{\alpha,\beta;\psi}\}$ is a commutative subalgebra of $\mathfrak{D}_\alpha$ which is isomorphic to $\mathfrak{A}^{\alpha,\beta;\psi}$.
\end{theorem}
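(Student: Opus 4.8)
As the theorem states, this is \cite[Theorem~4.2]{I11} rewritten in the present normalization via Remark~\ref{re2.1}, so the plan is to reprove it along those lines; the underlying mechanism is a bispectral Darboux transformation. First I would turn the discrete Wronskian \eqref{q} into a difference operator in $n$: expanding the determinant along its last row gives
\[
q^{\alpha,\beta;\psi}_n(t)=\theta(n,E_n)\big[p_n^{\alpha,\beta}(t)\big],\qquad \theta(n,E_n)=\sum_{j=0}^{k}b_j(n)E_n^{-j},
\]
where $b_j(n)$ is, up to sign, the $k\times k$ minor of $\big((-1)^{n-l+1}\psi^{(i)}_{n-l+1}\big)_{i,l}$ obtained by deleting the column $l=j{+}1$; in particular $b_0(n)=\pm\tau_{n-1}$, $b_k(n)=\pm\tau_n$, and $\theta$ annihilates every $\psi^{(i)}$. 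The key elementary point is that any $\mathcal{B}\in\mathfrak{D}_\alpha$ involves only $t$ while $\theta$ involves only $n$, so $\mathcal{B}$ and $\theta$ commute; hence \eqref{kr1} is equivalent to
\[
\theta(n,E_n)\big[\mathcal{B}_f\,p_n^{\alpha,\beta}(t)\big]=f\big(\lambda^{\alpha+\beta+1}_{n-k/2}\big)\,\theta(n,E_n)\big[p_n^{\alpha,\beta}(t)\big].
\]
Writing $\mathcal{B}_f[p^{\alpha,\beta}_n]=\sum_{i\ge0} d_i(n)\,p^{\alpha,\beta}_{n-i}$ and matching coefficients of $p^{\alpha,\beta}_{n-l}$ turns this into the triangular system
\[
b_0(n)\,d_l(n)=f\big(\lambda^{\alpha+\beta+1}_{n-k/2}\big)\,b_l(n)-\sum_{j=1}^{l}b_j(n)\,d_{l-j}(n-j),\qquad l\ge 0,
\]
with $b_j\equiv 0$ for $j>k$, which determines $d_0,d_1,\dots$ successively since $b_0\equiv\pm\tau_{n-1}\not\equiv 0$.

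Running this to low order, $d_0(n)=f(\lambda^{\alpha+\beta+1}_{n-k/2})$ and $b_0(n)d_1(n)=b_1(n)\big(f(\lambda^{\alpha+\beta+1}_{n-k/2})-f(\lambda^{\alpha+\beta+1}_{n-1-k/2})\big)$, which already shows $d_1$ fails to be a polynomial unless $\tau_{n-1}$ divides $f(\lambda^{\alpha+\beta+1}_{n-k/2})-f(\lambda^{\alpha+\beta+1}_{n-k/2-1})$; this is exactly the defining condition of $\mathfrak{A}^{\alpha,\beta;\psi}$, so the hypothesis on $f$ is forced. Conversely, assuming it, the substantive task---and the step I expect to be hardest---is to exhibit $\mathcal{B}_f$ explicitly as an element of $\mathbb{R}\langle\mathcal{D}_1,\mathcal{D}_2\rangle$ and verify \eqref{kr1} directly: I would assemble it as a polynomial in $\mathcal{D}_1$, $\mathcal{D}_2$ and $\mathcal{M}_1^{\alpha,\beta}$ (the latter reduced through \eqref{M1D1D2}) whose coefficients are built from the polynomials $g_m:=\big(f(\lambda^{\alpha+\beta+1}_{m-k/2})-f(\lambda^{\alpha+\beta+1}_{m-1-k/2})\big)/\tau_{m-1}\in\mathbb{R}[m]$, using the telescoped identity $f(\lambda^{\alpha+\beta+1}_{n-j-k/2})-f(\lambda^{\alpha+\beta+1}_{n-k/2})=-\sum_{l=0}^{j-1}\tau_{n-l-1}\,g_{n-l}$ to organize the cancellations, together with the fact (Remark~\ref{re:rec}) that the deformed functions $\psi^{(i)}_{n+\ep}$ lie in the kernel of a power $(L_{\alpha,\beta}(n+\ep,E_n))^m$---equivalently, that this power factors through $\theta$---which supplies the rigid arithmetic relations among the minors $b_j(n)$ needed for the construction to close up into a finite-order operator. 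One then checks that all $d_l$ produced are polynomials and that $d_l\equiv 0$ for $l$ large, so that $\mathcal{B}_f$ indeed has finite order.

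For the remaining assertions: since $\tau_n\neq 0$, each $q^{\alpha,\beta;\psi}_n(t)$ has exact degree $n$, so $\{q^{\alpha,\beta;\psi}_n\}_{n\ge 0}$ is a basis of $\mathbb{R}[t]$; hence a differential operator annihilating all of them is zero, which gives uniqueness of $\mathcal{B}_f$. Because $\mathfrak{A}^{\alpha,\beta;\psi}$ is a subalgebra of $\mathbb{R}[t]$---from $(ff')(u)-(ff')(v)=f(u)\big(f'(u)-f'(v)\big)+f'(v)\big(f(u)-f(v)\big)$ with $u=\lambda^{\alpha+\beta+1}_{n-k/2}$, $v=\lambda^{\alpha+\beta+1}_{n-k/2-1}$---uniqueness forces $\mathcal{B}_{f+f'}=\mathcal{B}_f+\mathcal{B}_{f'}$, $\mathcal{B}_{ff'}=\mathcal{B}_f\mathcal{B}_{f'}=\mathcal{B}_{f'}\mathcal{B}_f$ and $\mathcal{B}_1=\Id$, so $f\mapsto\mathcal{B}_f$ is a unital algebra homomorphism with commutative image $\mathfrak{D}^{\alpha,\beta;\psi}$. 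It is injective since $\mathcal{B}_f=0$ forces $f(\lambda^{\alpha+\beta+1}_{n-k/2})=0$ for all $n$, hence $f=0$; therefore $\mathfrak{D}^{\alpha,\beta;\psi}$ is a commutative subalgebra of $\mathfrak{D}_\alpha$ isomorphic to $\mathfrak{A}^{\alpha,\beta;\psi}$.
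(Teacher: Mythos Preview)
The paper does not give a proof of this theorem at all: it is simply stated as a restatement of \cite[Theorem~4.2]{I11} in the present normalization (via Remark~\ref{re2.1}), and the reader is referred there. So there is nothing in the paper to compare your argument against beyond that citation.

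Your outline is a faithful sketch of the bispectral Darboux mechanism underlying \cite{I11}. The structural pieces---rewriting $q_n^{\alpha,\beta;\psi}$ as $\theta(n,E_n)[p_n^{\alpha,\beta}]$, commuting the $t$-operator past the $n$-operator, deriving the triangular system for the $d_l$, reading off the necessity of the divisibility condition, and deducing commutativity and the isomorphism from uniqueness on the polynomial basis---are all correct and standard. The one place you are candid about not completing is exactly the heart of the matter: showing that the recursively defined $d_l(n)$ assemble into a \emph{finite-order} operator lying in $\mathfrak{D}_\alpha=\mathbb{R}\langle\mathcal{D}_1,\mathcal{D}_2\rangle$. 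In \cite{I11} this is done not by solving your triangular system directly but by an explicit inductive construction (the analogue of the operator $\tilde{\mathcal{B}}'_r$ in equation~\eqref{N4} here) that builds elements of $\mathfrak{D}_\alpha$ realizing prescribed lowering actions on $p_n^{\alpha,\beta}$; your telescoping identity and the factorization of $(L_{\alpha,\beta})^m$ through $\theta$ are the right ingredients, but as written they do not yet yield the finiteness or the membership in $\mathfrak{D}_\alpha$. If you want a self-contained proof rather than a citation, that construction is what you would need to supply.
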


From \cite[Lemma~4.5]{I11} we know that for every polynomial $r(n)\in\Rset[n]$ we can construct a differential operator $\Bt'_r =\Bt'_r (\mathcal{D}_1,\mathcal{D}_2)\in \mathfrak{D}_\alpha$ such that
\begin{equation}\label{N4}
r(n)p_{n}^{\alpha,\beta}(t)+r(-n-\alpha-\beta)p_{n-1}^{\alpha,\beta}(t)=\Bt'_r(\mathcal{D}_1,\mathcal{D}_2)[p_{n}^{\alpha,\beta}(t)+p_{n-1}^{\alpha,\beta}(t)].
\end{equation}
On the other hand, using the explicit formula~\eqref{pjacobi1} one can show that
\begin{align}
&p_{n}^{\alpha,\beta}(t)+p_{n-1}^{\alpha,\beta}(t)=\frac{2n+\alpha+\beta}{\alpha+\beta}p_{n}^{\alpha,\beta-1}(t),\label{N5}\\
&p_{n-1}^{\alpha,\beta}(t)=-\,\frac{1}{(\alpha+\beta)(\alpha+\beta-1)}\mathcal{D}_2[p_{n}^{\alpha,\beta-2}(t)].\label{N1}
\end{align}
Combining \eqref{N4} with \eqref{N5} we see that for every $r(n)\in\Rset[n]$ we can construct a differential operator $\Bt_r \in \mathfrak{D}_\alpha$ such that
\begin{equation}\label{N2}
r(n)p_{n}^{\alpha,\beta}(t)+r(-n-\alpha-\beta)p_{n-1}^{\alpha,\beta}(t)=(2n+\alpha+\beta)\Bt_r[p_{n}^{\alpha,\beta-1}(t)].
\end{equation}
Note also that from \eqref{M1D1D2} we have
\begin{equation}\label{N3}
\mathcal{M}_1^{\alpha,\beta+s} =\mathcal{D}_2 -\mathcal{D}_1^2 -(\alpha+\beta+s+1)\mathcal{D}_1\in  \mathfrak{D}_\alpha,
\end{equation}
for arbitrary $s\in\Rset$.

\begin{lemma}\label{lemm:conn}
For $l,j\in\mathbb{N}_0$ and $r(n)\in\Rset[n]$ we can construct differential operators $\Bh'_{r,l,j}\in\mathfrak{D}_\alpha $ and $\Bh''_{r,l,j}\in\mathfrak{D}_\alpha $ such that
\begin{align}
&r(n-l)p_{n-l}^{\alpha,\beta}(t)+(-1)^{j+1}r(-n-\alpha-\beta+l+j-1)p_{n-j-l}^{\alpha,\beta}(t)\nonumber\\
&\qquad=(2n+\alpha+\beta-2l-j+1)
\Bh'_{r,l,j}[p_{n}^{\alpha,\beta-j-2l}(t)],\label{K1}\\
\intertext{and}
&r(n-l)p_{n-l}^{\alpha,\beta}(t)+(-1)^{j}r(-n-\alpha-\beta+l+j-1)p_{n-j-l}^{\alpha,\beta}(t)\nonumber\\
&\qquad= \Bh''_{r,l,j}[p_{n}^{\alpha,\beta-j-2l}(t)].\label{K2}
\end{align}
\end{lemma}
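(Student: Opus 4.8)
The plan is to prove \eqref{K1} first and then deduce \eqref{K2} from it, and to prove \eqref{K1} by reducing to $l=0$ and then inducting on $j$. Let me first record the reduction to $l=0$: iterating \eqref{N1} with $\beta$ replaced successively by $\beta-j,\beta-j-2,\dots$ gives $p_{n-l}^{\alpha,\beta-j}(t)=c\,\mathcal{D}_2^l\big[p_n^{\alpha,\beta-j-2l}(t)\big]$ for a nonzero constant $c=c(\alpha,\beta,j,l)$. Granting \eqref{K1} in the case $l=0$, replace $n$ by $n-l$ there: the left-hand side becomes exactly the left-hand side of \eqref{K1}, while the right-hand side becomes $(2n+\alpha+\beta-2l-j+1)\,\Bh'_{r,0,j}\big[p_{n-l}^{\alpha,\beta-j}(t)\big]$; rewriting $p_{n-l}^{\alpha,\beta-j}(t)$ by the displayed formula yields \eqref{K1} with $\Bh'_{r,l,j}=c\,\Bh'_{r,0,j}\,\mathcal{D}_2^l\in\mathfrak{D}_\alpha$, since $\mathcal{D}_2\in\mathfrak{D}_\alpha$. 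The same step turns $\Bh''_{r,0,j}$ into $\Bh''_{r,l,j}$.

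To deduce \eqref{K2} from \eqref{K1} one notes that $2n+\alpha+\beta-2l-j+1=2(n-l)+\alpha+\beta-j+1$ changes sign under the involution $m\mapsto-m-\alpha-\beta+j-1$ with $m=n-l$. Applying \eqref{K1} with $r(m)$ replaced by $\tilde r(m)=(2m+\alpha+\beta-j+1)r(m)$, the two occurrences of $\tilde r$ bring out a common factor $2n+\alpha+\beta-2l-j+1$ on the left, with a sign flip on the reflected term, and the same factor on the right; cancelling it gives \eqref{K2} with $\Bh''_{r,l,j}=\Bh'_{\tilde r,l,j}$. Hence it suffices to prove \eqref{K1} for $l=0$ and all $j\in\Nset_0$.

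For the base cases of \eqref{K1} with $l=0$: when $j=1$, \eqref{K1} is precisely \eqref{N2}, so $\Bh'_{r,0,1}=\Bt_r$. When $j=0$, \eqref{K1} reads $[r(n)-r(-n-\alpha-\beta-1)]\,p_n^{\alpha,\beta}(t)=(2n+\alpha+\beta+1)\,\Bh'_{r,0,0}\big[p_n^{\alpha,\beta}(t)\big]$; since $\lambda_n^{\alpha+\beta+1}$ is invariant under $n\mapsto-n-\alpha-\beta-1$ while $2n+\alpha+\beta+1$ changes sign, the polynomial $r(n)-r(-n-\alpha-\beta-1)$ equals $(2n+\alpha+\beta+1)\,h\big(\lambda_n^{\alpha+\beta+1}\big)$ for some $h\in\Rset[x]$, and \eqref{2.3} together with \eqref{M1D1D2} allow one to take $\Bh'_{r,0,0}=h(\mathcal{M}_1^{\alpha,\beta})\in\mathfrak{D}_\alpha$.

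It remains to run the induction on $j\ge2$, assuming \eqref{K1} (and hence, by the previous step, \eqref{K2}) for all smaller gaps. I would express the left-hand side of \eqref{K1} for gap $j$ as a linear combination, with polynomial coefficients in $n$, of instances of \eqref{N2} and of the gap-$(j-1)$ identities taken at shifted indices such as $n$ and $n-1$, with the free polynomials chosen so that the auxiliary Jacobi polynomials arising at parameter $\beta$ (of degrees $n-1$ and $n-j+1$) cancel; on the right-hand side the surviving Jacobi polynomials then sit at parameters $\beta-1$ and $\beta-j+1$, and are reassembled into $p_n^{\alpha,\beta-j}(t)$ using \eqref{N5} and \eqref{N1}, the several linear prefactors collapsing into the single factor $2n+\alpha+\beta-j+1$. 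I expect this bookkeeping to be the main obstacle: making the cancellations and the final reassembly work for an \emph{arbitrary} polynomial $r$, and not merely for those symmetric under $n\mapsto-n-\alpha-\beta+j-1$, seems to require splitting $r$ into its symmetric and antisymmetric parts under this involution and combining the building blocks slightly differently for the two parts. The structural reason the induction cannot be short-circuited is that no operator in $\mathfrak{D}_\alpha$ raises the parameter of a Jacobi polynomial while fixing its degree, which is exactly why \eqref{K1} and \eqref{K2} must be two-term relations.
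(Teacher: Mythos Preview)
Your overall architecture matches the paper's proof exactly: reduce to $l=0$ via \eqref{N1}, prove \eqref{K1} by induction on $j$, and deduce \eqref{K2} by replacing $r(m)$ with $(2m+\alpha+\beta-j+1)r(m)$ in \eqref{K1}. The reduction to $l=0$, the base cases $j=0,1$, and the passage from \eqref{K1} to \eqref{K2} are all handled correctly.

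The gap is in the inductive step, which you leave as a sketch and flag as ``the main obstacle''. Your proposed approach---splitting $r$ into its symmetric and antisymmetric parts under the involution---is unnecessary; the induction goes through directly for arbitrary $r$. Write \eqref{K2} with $l=0$ and gap $j-1$ at both $n$ and $n-1$ and add: the right-hand side becomes $\Bh''_{r,0,j-1}\big[p_n^{\alpha,\beta-j+1}+p_{n-1}^{\alpha,\beta-j+1}\big]$, which by \eqref{N5} (with $\beta$ shifted to $\beta-j+1$) equals $\dfrac{2n+\alpha+\beta-j+1}{\alpha+\beta-j+1}\,\Bh''_{r,0,j-1}\big[p_n^{\alpha,\beta-j}\big]$. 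On the left you obtain the desired two terms $r(n)p_n^{\alpha,\beta}$ and $(-1)^{j-1}r(-n-\alpha-\beta+j-1)p_{n-j}^{\alpha,\beta}$ plus two unwanted middle terms $r(n-1)p_{n-1}^{\alpha,\beta}+(-1)^{j-1}r(-n-\alpha-\beta+j-2)p_{n-j+1}^{\alpha,\beta}$. But this middle block is exactly the left-hand side of \eqref{K1} with $l=0$, gap $j-2$, evaluated at $n-1$ with the \emph{same} polynomial $r$; its right-hand side carries the factor $2(n-1)+\alpha+\beta-(j-2)+1=2n+\alpha+\beta-j+1$, and $p_{n-1}^{\alpha,\beta-j+2}$ is converted to $p_n^{\alpha,\beta-j}$ by \eqref{N1}. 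Subtracting, the common factor $2n+\alpha+\beta-j+1$ emerges and one obtains
\[
\Bh'_{r,0,j}=\frac{1}{\alpha+\beta-j+1}\,\Bh''_{r,0,j-1}+\frac{1}{(\alpha+\beta-j+2)(\alpha+\beta-j+1)}\,\Bh'_{r,0,j-2}\,\mathcal{D}_2\in\mathfrak{D}_\alpha.
\]
This two-step recursion explains why you need both base cases $j=0$ and $j=1$, and it involves no choice of auxiliary polynomials, no cancellation to engineer, and no symmetrization of $r$. The paper's terse hint ``using equations~\eqref{2.3} and \eqref{N2}'' refers to \eqref{2.3} powering the $j=0$ base case (exactly as you wrote) and \eqref{N2}/\eqref{N5} powering the step.
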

\begin{proof}
First note that if \eqref{K1} holds for $l=0$, then we can prove it for all $l\in\Nset_0$ by using \eqref{N1}. The proof of \eqref{K1} now follows by induction on $j$, using equations~\eqref{2.3} and \eqref{N2}. Replacing $r(n)$ with $(2n+\alpha+\beta-j+1)r(n)$ in \eqref{K1} yields \eqref{K2}.
\end{proof}
Using the above lemma we can construct a differential analog of formula \eqref{q}.
\begin{proposition}\label{prop:conn}
Let
\begin{equation}\label{eq:DF}
c_{n,k}^{\alpha+\beta}=\begin{cases}
(-1)^{nk} & \text{ if }k\equiv 0 \text{ or } k\equiv 3 \mod 4,\\
(-1)^{nk}(2n+\alpha+\beta-k+1) & \text{ if }k\equiv 1 \text{ or } k\equiv 2 \mod 4.
\end{cases}
\end{equation}
We can construct a differential operator $\Bh_{\psi}=\Bh_{\psi}(\mathcal{D}_1,\mathcal{D}_2)\in\mathfrak{D}_\alpha$ such that
\begin{equation}\label{eq:conn}
q_n^{\alpha,\beta;\psi}(t)= c_{n,k}^{\alpha+\beta}\,\Bh_{\psi}(\mathcal{D}_1,\mathcal{D}_2)[p_{n}^{\alpha,\beta-k}(t)].
\end{equation}
\end{proposition}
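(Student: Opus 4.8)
The plan is to expand the discrete Wronskian in \eqref{q} along the row occupied by the Jacobi polynomials and then group the resulting $k+1$ terms into pairs to which Lemma~\ref{lemm:conn} applies, each pair lowering the second parameter from $\beta$ to $\beta-k$. Since $\psi_n^{(i)}\in(-1)^n\Rset[\lambda_n^{\alpha+\beta+1}]$ we may write $\psi_n^{(i)}=(-1)^n r_i(\lambda_n^{\alpha+\beta+1})$ with $r_i\in\Rset[x]$. Expanding the $(k+1)\times(k+1)$ determinant $q_n^{\alpha,\beta;\psi}(t)$ along its last row and pulling the scalar $(-1)^{n-j'+1}$ out of the $j'$-th column of each resulting $k\times k$ minor, one arrives at
\[
q_n^{\alpha,\beta;\psi}(t)=(-1)^{nk}\,\delta_k\sum_{i=0}^{k}\rho_i(n)\,p_{n-i}^{\alpha,\beta}(t),
\]
where $\delta_k\in\{\pm1\}$ does not depend on $n$ and $\rho_i(n)=\det\big(r_{i'}(\lambda_{n-j'+1}^{\alpha+\beta+1})\big)$ with $0\le i'\le k-1$ and $j'$ running over $\{1,\dots,k+1\}\setminus\{i+1\}$; in particular $\rho_i(n)\in\Rset[n]$ because $\lambda_m^{\alpha+\beta+1}=-m(m+\alpha+\beta+1)$ is a polynomial in $m$.

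The next step is to record a symmetry of these coefficients. Since $\lambda_m^{\alpha+\beta+1}$ is invariant under $m\mapsto -m-\alpha-\beta-1$, the substitution $n\mapsto -n-\alpha-\beta+k-1$ sends the columns indexed by the shifts $n,n-1,\dots,n-k$ to the same set of columns, but in reversed order, and it interchanges the minor with the $(i+1)$-st column deleted and the one with the $(k-i+1)$-st column deleted; keeping track of the sign $(-1)^{k(k-1)/2}$ of this order reversal yields
\[
\rho_{k-i}(n)=(-1)^{k(k-1)/2}\,\rho_i(-n-\alpha-\beta+k-1),\qquad i=0,1,\dots,k.
\]

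With this in hand, for each $0\le i<k/2$ I would apply Lemma~\ref{lemm:conn} to the pair $\rho_i(n)p_{n-i}^{\alpha,\beta}(t)+\rho_{k-i}(n)p_{n-(k-i)}^{\alpha,\beta}(t)$ with $l=i$, $j=k-2i$ and $r(x)=\rho_i(x+i)$; since $j+2l=k$, this pair becomes $(2n+\alpha+\beta-k+1)\,\Bh'_{r,l,j}[p_n^{\alpha,\beta-k}(t)]$ via \eqref{K1} when $(-1)^{k(k-1)/2}=(-1)^{k+1}$, and $\Bh''_{r,l,j}[p_n^{\alpha,\beta-k}(t)]$ via \eqref{K2} when $(-1)^{k(k-1)/2}=(-1)^{k}$, the symmetry above being exactly what guarantees the required identity among the $\rho$'s. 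When $k$ is even, the leftover middle term $\rho_{k/2}(n)p_{n-k/2}^{\alpha,\beta}(t)$ is handled by the same two formulas with $l=k/2$, $j=0$: the symmetry now says $\rho_{k/2}$ is even or odd under $n\mapsto -n-\alpha-\beta+k-1$, so precisely one of \eqref{K1}, \eqref{K2} produces $2\,\rho_{k/2}(n)p_{n-k/2}^{\alpha,\beta}(t)$ on its left side. Summing all contributions, everything is written through $p_n^{\alpha,\beta-k}(t)$, and a parity check modulo $4$ shows that the common factor $2n+\alpha+\beta-k+1$ is present exactly when $k\equiv 1$ or $k\equiv 2\pmod 4$; together with the prefactor $(-1)^{nk}$ this reproduces $c_{n,k}^{\alpha+\beta}$ as in \eqref{eq:DF}, and $\Bh_\psi$ is the corresponding $\Rset$-linear combination of the operators $\Bh'_{r,l,j}$, $\Bh''_{r,l,j}$ furnished by Lemma~\ref{lemm:conn}, which lies in $\mathfrak{D}_\alpha$; this gives \eqref{eq:conn}.

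The hard part is not any single computation but the reconciliation of three independent sources of signs: the Laplace expansion of the $(k+1)\times(k+1)$ Wronskian, the $(-1)^n$ normalization extracted column by column, and the order-reversing column permutation in the symmetry step. One must verify that the surviving parity is exactly the one built into \eqref{eq:DF}, in particular that the extra factor $2n+\alpha+\beta-k+1$ is forced for $k\equiv 1,2\pmod 4$ and absent for $k\equiv 0,3\pmod 4$. Deriving the coefficient symmetry conceptually, from the invariance of $\lambda_m^{\alpha+\beta+1}$ under $m\mapsto -m-\alpha-\beta-1$ rather than through a direct determinant manipulation, is what keeps this bookkeeping manageable.
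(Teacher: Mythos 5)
Your proposal is correct and follows essentially the same route as the paper: expand the Wronskian along the row of Jacobi polynomials, extract the $(-1)^n$ factors, use the invariance of $\lambda_m^{\alpha+\beta+1}$ under $m\mapsto -m-\alpha-\beta-1$ (the paper's involution $\cI$) to get the cofactor symmetry with sign $(-1)^{k(k-1)/2}$, pair the terms $l$ and $k-l$, and apply Lemma~\ref{lemm:conn} with $j=k-2l$, the mod-$4$ parity deciding between \eqref{K1} and \eqref{K2} and hence the form of $c_{n,k}^{\alpha+\beta}$. Your explicit treatment of the middle term for even $k$ is a minor point the paper glosses over, but otherwise the arguments coincide.
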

\begin{proof}
We know that $\psi_n^{(j)}=(-1)^nf^{(j)}_n$, where $f^{(j)}_n\in \Rset[\lambda_n^{\alpha+\beta+1}]$ for $j=0,\dots,k-1$. If we substitute this into the right-hand side of equation~\eqref{q} and expand the determinant along the last column, we see that
\begin{equation}\label{conn1}
q_n^{\alpha,\beta;\psi}(t)=(-1)^{k(2n-k+1)/2}\sum_{l=0}^{k} \Delta_{n}^{(l)}p_{n-l}^{\alpha,\beta}(t),
\end{equation}
where $\Delta_{n}^{(l)}$ is the $k\times k$ determinant with $(i_1,i_2)$ entry $f^{(i_2)}_{n-i_1}$, and $i_1\in\{0,1,\dots,k\}\setminus\{l\}$,  $i_2\in\{0,1,\dots,k-1\}$.
If $\cI$ is the involution on $\Rset[n]$, defined by
$$\cI(n)=-(n-k+\alpha+\beta+1),$$
then it is easy to see that $\cI(f^{(j)}_{n-l})=f^{(j)}_{n-k+l}$. Therefore, applying $\cI$ to the determinant $\Delta_{n}^{(l)}$ gives $\Delta_{n}^{(k-l)}$ with reversed rows, hence
$$\cI(\Delta_{n}^{(l)})=(-1)^{k(k-1)/2}\Delta_{n}^{(k-l)}.$$
If we combine the terms $ \Delta_{n}^{(l)}p_{n-l}^{\alpha,\beta}(t)$ and  $\Delta_{n}^{(k-l)}p_{n-k+l}^{\alpha,\beta}(t)$ in \eqref{conn1} for $l=0,1,\dots,\lfloor\frac{k}{2}\rfloor$, we see that
\begin{align}
&\Delta_{n}^{(l)}p_{n-l}^{\alpha,\beta}(t)+\Delta_{n}^{(k-l)}p_{n-k+l}^{\alpha,\beta}(t)\nonumber\\
&\qquad=\Delta_{n}^{(l)}p_{n-l}^{\alpha,\beta}(t)+(-1)^{k(k-1)/2}\Delta_{-n+k-\alpha-\beta-1}^{(l)}p_{n-k+l}^{\alpha,\beta}(t)\nonumber\\
&\qquad=\Delta_{n}^{(l)}p_{n-l}^{\alpha,\beta}(t)+(-1)^{k+\ep}\Delta_{-n+k-\alpha-\beta-1}^{(l)}p_{n-k+l}^{\alpha,\beta}(t),\label{conn2}
\end{align}
where $\ep$ is the remainder of the division of $k(k-3)/2$ by $2$, i.e.
\begin{equation}\label{eps}
\ep =\begin{cases}
0 & \text{ if }k\equiv 0 \text{ or } k\equiv 3 \mod 4,\\
1 & \text{ if }k\equiv 1 \text{ or } k\equiv 2 \mod 4.
\end{cases}
\end{equation}
Applying now Lemma~\ref{lemm:conn} with $j=k-2l$ to \eqref{conn2}, we can construct a differential operator $\Bh_l \in \mathfrak{D}_\alpha$ such that
$$
(-1)^{nk}\left(\Delta_{n}^{(l)}p_{n-l}^{\alpha,\beta}(t)+\Delta_{n}^{(k-l)}p_{n-k+l}^{\alpha,\beta}(t)\right)=c_{n,k}^{\alpha+\beta} \,\Bh_{l}[p_{n}^{\alpha,\beta-k}(t)].
$$
The proof follows by rearranging the terms in \eqref{conn1} and by using the last equation.
\end{proof}

Using Proposition~\ref{prop:conn} we can relate the operators in $\mathcal{B}_f$ constructed in Theorem~\ref{th1} to the hypergeometric operator \eqref{difeq-1} by a Darboux transformation.

\begin{corollary} \label{co:1DDarboux}
For $f\in\mathfrak{A}^{\alpha,\beta;\psi}$, the operators $\mathcal{B}_f(\mathcal{D}_1,\mathcal{D}_2)$ constructed in  Theorem~\ref{th1} and $f(\mathcal{M}_1^{\alpha,\beta-k}+\lambda_{k/2}^{-\alpha-\beta-1})$ satisfy the intertwining relation
\begin{equation}\label{D1}
\mathcal{B}_f(\mathcal{D}_1,\mathcal{D}_2)\,\Bh_{\psi}(\mathcal{D}_1,\mathcal{D}_2)= \Bh_{\psi}(\mathcal{D}_1,\mathcal{D}_2)\, f(\mathcal{M}_1^{\alpha,\beta-k}+\lambda_{k/2}^{-\alpha-\beta-1}),
\end{equation}
where $\Bh_{\psi}$ is the operator defined in Proposition~\ref{prop:conn}.
\end{corollary}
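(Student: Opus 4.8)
The plan is to establish the operator identity~\eqref{D1} by evaluating both of its sides on the basis $\{p_n^{\alpha,\beta-k}(t)\}_{n\in\Nset_0}$ of $\Rset[t]$. This reduction is legitimate: both sides belong to $\mathfrak{D}_\alpha$ --- the operators $\mathcal{B}_f(\mathcal{D}_1,\mathcal{D}_2)$ and $\Bh_\psi(\mathcal{D}_1,\mathcal{D}_2)$ lie in $\mathfrak{D}_\alpha$ by Theorem~\ref{th1} and Proposition~\ref{prop:conn}, while $\mathcal{M}_1^{\alpha,\beta-k}\in\mathfrak{D}_\alpha$ by~\eqref{N3} with $s=-k$, so $f\bigl(\mathcal{M}_1^{\alpha,\beta-k}+\lambda_{k/2}^{-\alpha-\beta-1}\bigr)\in\mathfrak{D}_\alpha$ as well --- and any operator in $\mathfrak{D}_\alpha$ has polynomial coefficients, hence must vanish as soon as it annihilates all of $\Rset[t]$.

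First I would compute the action of the right-hand side on $p_n^{\alpha,\beta-k}(t)$. By~\eqref{2.3} with $\beta$ replaced by $\beta-k$, this polynomial is an eigenfunction of $\mathcal{M}_1^{\alpha,\beta-k}$ with eigenvalue $\lambda_n^{\alpha+\beta-k+1}$, hence an eigenfunction of $\mathcal{M}_1^{\alpha,\beta-k}+\lambda_{k/2}^{-\alpha-\beta-1}$ with eigenvalue $\lambda_n^{\alpha+\beta-k+1}+\lambda_{k/2}^{-\alpha-\beta-1}$. A short computation from the definition~\eqref{lambda} yields the spectral shift identity
$$\lambda_n^{\alpha+\beta-k+1}+\lambda_{k/2}^{-\alpha-\beta-1}=\lambda_{n-k/2}^{\alpha+\beta+1},$$
which is exactly why this particular shift of $\mathcal{M}_1^{\alpha,\beta-k}$ is the right operator to appear in~\eqref{D1}. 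Applying $f$ and then $\Bh_\psi$ therefore gives
$$\Bh_\psi(\mathcal{D}_1,\mathcal{D}_2)\,f\bigl(\mathcal{M}_1^{\alpha,\beta-k}+\lambda_{k/2}^{-\alpha-\beta-1}\bigr)\bigl[p_n^{\alpha,\beta-k}(t)\bigr]=f\bigl(\lambda_{n-k/2}^{\alpha+\beta+1}\bigr)\,\Bh_\psi(\mathcal{D}_1,\mathcal{D}_2)\bigl[p_n^{\alpha,\beta-k}(t)\bigr].$$

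For the left-hand side I would invoke Proposition~\ref{prop:conn}: for generic values of the parameters $c_{n,k}^{\alpha+\beta}\neq 0$, so~\eqref{eq:conn} reads $\Bh_\psi(\mathcal{D}_1,\mathcal{D}_2)\bigl[p_n^{\alpha,\beta-k}(t)\bigr]=\bigl(c_{n,k}^{\alpha+\beta}\bigr)^{-1}q_n^{\alpha,\beta;\psi}(t)$, and then~\eqref{kr1} yields
$$\mathcal{B}_f(\mathcal{D}_1,\mathcal{D}_2)\,\Bh_\psi(\mathcal{D}_1,\mathcal{D}_2)\bigl[p_n^{\alpha,\beta-k}(t)\bigr]=\bigl(c_{n,k}^{\alpha+\beta}\bigr)^{-1}\mathcal{B}_f(\mathcal{D}_1,\mathcal{D}_2)\bigl[q_n^{\alpha,\beta;\psi}(t)\bigr]=f\bigl(\lambda_{n-k/2}^{\alpha+\beta+1}\bigr)\bigl(c_{n,k}^{\alpha+\beta}\bigr)^{-1}q_n^{\alpha,\beta;\psi}(t),$$
and the last expression is $f\bigl(\lambda_{n-k/2}^{\alpha+\beta+1}\bigr)\Bh_\psi(\mathcal{D}_1,\mathcal{D}_2)\bigl[p_n^{\alpha,\beta-k}(t)\bigr]$, matching the right-hand side. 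Hence the two sides of~\eqref{D1} agree on every $p_n^{\alpha,\beta-k}(t)$, so they agree as operators, and the non-generic parameter values are then covered by the fact that both sides depend rationally on $\alpha$ and $\beta$.

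This is essentially a bookkeeping argument resting on Theorem~\ref{th1} and Proposition~\ref{prop:conn}; the one step with genuine content is the spectral shift identity $\lambda_n^{\alpha+\beta-k+1}+\lambda_{k/2}^{-\alpha-\beta-1}=\lambda_{n-k/2}^{\alpha+\beta+1}$, which pins down the Darboux-shifted Jacobi operator $\mathcal{M}_1^{\alpha,\beta-k}$ as the one whose (shifted) spectrum on the $p_n^{\alpha,\beta-k}$ reproduces the eigenvalues $\lambda_{n-k/2}^{\alpha+\beta+1}$ of $\mathcal{B}_f$ on the $q_n^{\alpha,\beta;\psi}$. The only technical nuisance is the possible vanishing of $c_{n,k}^{\alpha+\beta}$ at exceptional parameter values, which I would dispose of by the density argument above.
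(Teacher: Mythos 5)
Your proposal is correct and follows essentially the same route as the paper: apply both sides to the basis $\{p_n^{\alpha,\beta-k}\}$, use the connection formula \eqref{eq:conn} together with the eigenvalue equations \eqref{kr1} and \eqref{2.3} and the shift identity $\lambda_n^{\alpha+\beta-k+1}+\lambda_{k/2}^{-\alpha-\beta-1}=\lambda_{n-k/2}^{\alpha+\beta+1}$, and conclude the operator identity from vanishing on a polynomial basis. The only difference is cosmetic: the paper keeps the factor $c_{n,k}^{\alpha+\beta}$ multiplied through rather than inverted, while you divide by it and patch the exceptional parameter values with a rationality argument, which amounts to the same thing.
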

\begin{proof}
It is easy to check that
\begin{equation}\label{eq:laid}
\lambda_{n-k/2}^{\alpha+\beta+1}=\lambda_{n}^{\alpha+\beta-k+1}+\lambda_{k/2}^{-\alpha-\beta-1}.
\end{equation}
Combining this with equations \eqref{2.3}, \eqref{eq:conn} and \eqref{kr1} we see that
\begin{align*}
c_{n,k}^{\alpha+\beta}\,\mathcal{B}_f\,\Bh_{\psi}[p_{n}^{\alpha,\beta-k}(t)] &=\mathcal{B}_f \big[q^{\alpha,\beta;\psi}_n(t)\big] \\
&=f\big(\lambda^{\alpha+\beta+1}_{n-k/2}\big) q^{\alpha,\beta;\psi}_n(t)\\
&=c_{n,k}^{\alpha+\beta}\,f\big(\lambda^{\alpha+\beta+1}_{n-k/2}\big)\,\Bh_{\psi}[p_{n}^{\alpha,\beta-k}(t)]\\
&=c_{n,k}^{\alpha+\beta}\,\Bh_{\psi}\,[f\big(\lambda_{n}^{\alpha+\beta-k+1}+\lambda_{k/2}^{-\alpha-\beta-1}\big) p_{n}^{\alpha,\beta-k}(t)]\\
&=c_{n,k}^{\alpha+\beta}\,\Bh_{\psi}\,[f\big(\mathcal{M}_1^{\alpha,\beta-k}+\lambda_{k/2}^{-\alpha-\beta-1}\big) p_{n}^{\alpha,\beta-k}(t)],
\end{align*}
which shows that
$$\left(\mathcal{B}_f\,\Bh_{\psi}- \Bh_{\psi}\, f(\mathcal{M}_1^{\alpha,\beta-k}+\lambda_{k/2}^{-\alpha-\beta-1})\right)[p_{n}^{\alpha,\beta-k}(t)]=0.$$
Since the last equation is true for all $n\in\Nset_0$, we conclude that \eqref{D1} holds, completing the proof.
\end{proof}

\begin{remark}
Recall that if two operators $\cO_1,\cO_2$ acting on the same space can be connected by the intertwining relation
\begin{equation}\label{Darbouxg}
\cO_{2}\cO=\cO\cO_1,
\end{equation}
with another operator $\cO$, then we say the operator $\cO_{2}$ is obtained
from $\cO_1$ by a Darboux transformation, after the work by G.~Darboux~\cite{D1882}. Thus,
Corollary~\ref{co:1DDarboux} shows that the commuting operators $\mathcal{B}_f$ are Darboux transformations from the commuting operators $\tilde{f}(\mathcal{M}_1^{\alpha,\beta-k})$, where $\tilde{f}(t)=f(t+\lambda_{k/2}^{-\alpha-\beta-1})$ and $\mathcal{M}_1^{\alpha,\beta-k}$ is the hypergeometric operator. We note that this also follows from the work of Gr\"unbaum and Yakimov \cite{GY02}. Our approach provides a new constructive proof of this fact, based on the techniques developed in \cite{I11}, which show that the operators $\mathcal{B}_f$ and the intertwining operator $\Bh_{\psi}$ belong to $\mathfrak{D}_\alpha$. The fact that $\mathcal{B}_f,\Bh_{\psi}\in\mathfrak{D}_\alpha$ will allow us to extend the Darboux transformation above to the multivariable setting.
\end{remark}

\subsection{Operators and polynomials depending on an additional parameter}\label{ss2.3}

Now, we introduce a modification of the differential operator $\mathcal{D}_2$ which is crucial for the multivariable extensions.
For arbitrary $s\in\mathbb{N}_0$, we define the operators
\begin{equation}\label{D2s}
\mathcal{D}_{2,s} = \mathcal{D}_2-\frac{s(s+\alpha)}{1-t},
\end{equation}
and
\begin{equation}\label{m1salg}
\mathcal{M}_{1}^{\alpha,\beta;s} = \mathcal{D}_{2,s}-\mathcal{D}_1^2 - (\alpha+\beta+1)\mathcal{D}_1.
\end{equation}
It is easy to see that
$$[\mathcal{D}_{2,s},\mathcal{D}_1]=\mathcal{D}_{2,s},$$
and
\begin{equation}\label{m1s}
\mathcal{M}_{1}^{\alpha,\beta;s}[ p_{n}^{\alpha+2s,\beta} (t)(1-t)^s] = \lambda_{n+s}^{\alpha+\beta+1} p_{n}^{\alpha+2s,\beta} (t)(1-t)^s.
\end{equation}

There is an extension of Proposition~\ref{prop:conn} which plays an important role in the multivariable case. Note first that the construction of the operator  $\Bh_{\psi}(\mathcal{D}_1,\mathcal{D}_{2})$ in equation~\eqref{eq:conn} depends only on the equations \eqref{N4}, \eqref{N5} and \eqref{N1}.
Using the remarks in \cite[page~457]{I11}, we see that equation~\eqref{N4} can be extended to
\begin{equation}\label{N4s}
\begin{split}
&r(n+s)p_{n}^{\alpha+2s,\beta}(t)(1-t)^{s}+r(-n-s-\alpha-\beta)p_{n-1}^{\alpha+2s,\beta}(t)(1-t)^{s}\\
&\qquad=\Bt'_r(\mathcal{D}_1,\mathcal{D}_{2,s})[p_{n}^{\alpha+2s,\beta}(t)(1-t)^{s}+p_{n-1}^{\alpha+2s,\beta}(t)(1-t)^{s}],
\end{split}
\end{equation}
where the operator $\Bt'_r(\mathcal{D}_1,\mathcal{D}_{2,s})$ is obtained from the operator $\Bt'_r(\mathcal{D}_1,\mathcal{D}_2)$ in \eqref{N4} by replacing $\mathcal{D}_2$ with $\mathcal{D}_{2,s}.$
Moreover, it is easy to see that equations \eqref{N5} and \eqref{N1} have the following extensions
\begin{align}
&p_{n}^{\alpha+2s,\beta}(t)(1-t)^s+p_{n-1}^{\alpha+2s,\beta}(t)(1-t)^s\nonumber\\
&\qquad\qquad=\frac{\alpha+\beta}{\alpha+\beta+2s}\frac{2(n+s)+\alpha+\beta}{\alpha+\beta}p_{n}^{\alpha+2s,\beta-1}(t)(1-t)^s,\label{N5s}\\
&p_{n-1}^{\alpha+2s,\beta}(t)(1-t)^s= -  \frac{(\alpha+\beta)(\alpha+\beta-1)}{(\alpha+\beta+2s)(\alpha+\beta+2s-1)}\nonumber\\
& \qquad\qquad \times
\frac{1}{(\alpha+\beta)(\alpha+\beta-1)} \mathcal{D}_{2,s}\left[p_{n}^{\alpha+2s,\beta-2}(t)(1-t)^s \right].\label{N1s}
\end{align}
Using equations \eqref{N4s}, \eqref{N5s}, \eqref{N1s} and adapting the proof of Proposition~\ref{prop:conn} we obtain the following extension.
\begin{proposition}\label{prop:Bhs}
Let
\begin{equation}\label{qcup}
\hat{q}_{n,s}^{\alpha,\beta;\psi}(t) = \Wr_{n} \big(\psi_{n+s}^{(0)},\dots,\psi_{n+s}^{(k-1)},p_n^{\alpha+2s,\beta}(t)\big),
\end{equation}
and
\begin{equation}\label{eq:chs}
\hat{c}_{n,k,s}^{\alpha+\beta}= c_{n+s,k}^{\alpha+\beta}\prod_{j=0}^{k-1}\frac{\alpha+\beta-j}{\alpha+\beta+2s-j},
\end{equation}
where $c_{n+s,k}^{\alpha+\beta}$ is defined in \eqref{eq:DF}.
Then we have
\begin{equation}\label{qgorros}
\hat{q}_{n,s}^{\alpha,\beta;\psi}(t)\,(1-t)^s=\hat{c}_{n,k,s}^{\alpha+\beta}\,\Bh_{\psi}(\mathcal{D}_1,\mathcal{D}_{2,s})[p_{n}^{\alpha+2s,\beta-k}(t)\,(1-t)^s],
\end{equation}
where $\Bh_{\psi}(\mathcal{D}_1,\mathcal{D}_{2,s})$ is obtained from the operator $\Bh_{\psi}(\mathcal{D}_1,\mathcal{D}_{2})$ in Proposition~\ref{prop:conn} by replacing $\mathcal{D}_2$ with $\mathcal{D}_{2,s}.$
\end{proposition}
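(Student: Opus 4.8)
The plan is to mirror the proof of Proposition~\ref{prop:conn} step by step, replacing every one-dimensional ingredient by its ``shifted'' counterpart from \eqref{N4s}, \eqref{N5s}, \eqref{N1s}, and keeping track of the extra constant factors that appear. First I would expand the discrete Wronskian in \eqref{qcup} along the last column, exactly as in \eqref{conn1}, writing
\begin{equation*}
\hat{q}_{n,s}^{\alpha,\beta;\psi}(t)=(-1)^{k(2n-k+1)/2}\sum_{l=0}^{k}\Delta_{n+s}^{(l)}\,p_{n-l}^{\alpha+2s,\beta}(t),
\end{equation*}
where $\Delta_{n+s}^{(l)}$ is the same determinant in the functions $f^{(j)}$ as before, only with $n$ replaced by $n+s$ in its arguments (this is forced by $\psi_{n+s}^{(j)}=(-1)^{n+s}f_{n+s}^{(j)}$). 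Multiplying through by $(1-t)^s$ gives the quantity we must identify. The key observation is that the involution argument is unchanged: with $\cI(n)=-(n-k+\alpha+\beta+1)$ we still have $\cI(f^{(j)}_{n+s-l})=f^{(j)}_{n+s-k+l}$, so $\cI(\Delta_{n+s}^{(l)})=(-1)^{k(k-1)/2}\Delta_{n+s}^{(k-l)}$, and combining the $l$-th and $(k-l)$-th terms produces the same sign pattern governed by $\ep$ in \eqref{eps}.

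Next I would apply the shifted connection formulas. Here is where the bookkeeping must be done carefully: the analogue of Lemma~\ref{lemm:conn} in the shifted setting is obtained by starting from \eqref{N4s} instead of \eqref{N4} and using \eqref{N5s}, \eqref{N1s}, \eqref{m1s} in place of \eqref{N5}, \eqref{N1}, \eqref{2.3}. Because \eqref{N5s} and \eqref{N1s} carry the extra factors $\frac{\alpha+\beta}{\alpha+\beta+2s}$ and $\frac{(\alpha+\beta)(\alpha+\beta-1)}{(\alpha+\beta+2s)(\alpha+\beta+2s-1)}$ respectively, iterating the induction on $j$ (as in the proof of Lemma~\ref{lemm:conn}) from step $0$ up to step $k$ accumulates precisely the telescoping product $\prod_{j=0}^{k-1}\frac{\alpha+\beta-j}{\alpha+\beta+2s-j}$, which is the factor appearing in \eqref{eq:chs}. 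The remaining factor $c_{n+s,k}^{\alpha+\beta}$ comes from the same computation that produced $c_{n,k}^{\alpha+\beta}$ in Proposition~\ref{prop:conn}, now evaluated at the shifted index $n+s$: each application of the shifted \eqref{N2}-type identity brings in a factor $2(n+s)+\alpha+\beta-\cdots$ exactly as in \eqref{eq:DF}, and the $(-1)^{nk}$ powers are unaffected by the shift. The operators produced at each stage are polynomials in $\mathcal{D}_1$ and $\mathcal{D}_{2,s}$ because, by construction (see the remark after \eqref{N4s}), the shifted operators $\Bt'_r(\mathcal{D}_1,\mathcal{D}_{2,s})$ are obtained from the unshifted ones simply by substituting $\mathcal{D}_{2,s}$ for $\mathcal{D}_2$, and the commutation relation $[\mathcal{D}_{2,s},\mathcal{D}_1]=\mathcal{D}_{2,s}$ is identical to \eqref{W1}, so every algebraic manipulation carried out in the proof of Proposition~\ref{prop:conn} transfers verbatim.

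Assembling these pieces: applying the shifted Lemma with $j=k-2l$ to each combined pair yields an operator $\Bh_{l}(\mathcal{D}_1,\mathcal{D}_{2,s})\in\mathfrak{D}_\alpha$ with
\begin{equation*}
(-1)^{nk}\bigl(\Delta_{n+s}^{(l)}p_{n-l}^{\alpha+2s,\beta}(t)+\Delta_{n+s}^{(k-l)}p_{n-k+l}^{\alpha+2s,\beta}(t)\bigr)(1-t)^s=\hat{c}_{n,k,s}^{\alpha+\beta}\,\Bh_{l}(\mathcal{D}_1,\mathcal{D}_{2,s})[p_{n}^{\alpha+2s,\beta-k}(t)(1-t)^s],
\end{equation*}
and summing over $l=0,1,\dots,\lfloor k/2\rfloor$ and setting $\Bh_{\psi}=\sum_l\Bh_{l}$ gives \eqref{qgorros}, with $\Bh_{\psi}(\mathcal{D}_1,\mathcal{D}_{2,s})$ literally the operator of Proposition~\ref{prop:conn} with $\mathcal{D}_2\mapsto\mathcal{D}_{2,s}$ — this last identification holds because the $\Bh_l$ are built from the same $\Bt'_r$, whose dependence on $\mathcal{D}_2$ versus $\mathcal{D}_{2,s}$ is purely formal. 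The main obstacle is the constant-tracking: one must verify that the factors contributed by \eqref{N5s} and \eqref{N1s} over the course of the $j$-induction telescope exactly into $\prod_{j=0}^{k-1}\frac{\alpha+\beta-j}{\alpha+\beta+2s-j}$ and that no spurious powers of $2$ or signs are introduced by the shift, so that \eqref{eq:chs} is reproduced on the nose rather than merely up to a harmless scalar. Everything else is a routine transcription of the argument already given for Proposition~\ref{prop:conn}.
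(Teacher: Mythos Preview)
Your proposal is correct and follows precisely the route the paper itself indicates: the paper's proof is the single sentence ``using equations \eqref{N4s}, \eqref{N5s}, \eqref{N1s} and adapting the proof of Proposition~\ref{prop:conn},'' and you have spelled out that adaptation in detail. One small point to watch in your bookkeeping: the claim that ``the $(-1)^{nk}$ powers are unaffected by the shift'' is not quite right, since $c_{n+s,k}^{\alpha+\beta}$ carries $(-1)^{(n+s)k}$; this extra $(-1)^{sk}$ arises when you pull the signs $(-1)^{n+s-i_1}$ out of the $\psi$-entries in the Wronskian expansion, so your prefactor should read $(-1)^{k(2(n+s)-k+1)/2}$ rather than $(-1)^{k(2n-k+1)/2}$, after which everything matches.
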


\section{Multivariable operators and polynomials}\label{se3}

\subsection{Notations and preliminary results}\label{ss3.1}

We  start by introducing some vector notations which will be used in the rest of the paper.
For a vector $v=(v_1,\dots, v_r)$, we denote by $|v|=v_1+\dots+v_r$ the sum of its components. Moreover, for $1\leqslant j \leqslant r$, we define $\boldsymbol{v}_j = (v_1,\dots, v_j)$ and $\boldsymbol{v}^j = (v_j,\dots, v_r)$, with the convention $\boldsymbol{v}_0 =\boldsymbol{v}^{r+1}= 0$.

Throughout the paper, we consider  $\gamma=(\gamma_1,\dots,\gamma_{d+1})$ with components $\gamma_j>-1$ and $x=(x_1,\dots,x_d)\in\mathbb{R}^d$. The Dirichlet distribution on the simplex
$$\Tset^{d}=\{x\in\Rset^{d}:  x_i\geq 0\text{ and }|x|\leq 1\}$$
is defined by
$$
W(x) = \frac{\Gamma(|\gamma|+d+1)}{\prod_{j=1}^{d+1}\Gamma(\gamma_j+1)}\,x_1^{\gamma_1}\cdot\dots\cdot x_d^{\gamma_d} (1-|x|)^{\gamma_{d+1}}.
$$
A system of mutually orthogonal polynomials can be defined recursively (see \cite[Section~5.3]{DX14}) via the formula
\begin{equation}\label{simplexp}
P_\eta^\gamma(x) = p_{\eta_1}^{a_1,b_1}(z_1) \, (1-z_1)^{\eta_2+\dots+\eta_d} \, P_{\eta_2,\dots,\eta_d}^{\gamma_2,\dots,\gamma_{d+1}}(z_2,\dots,z_d),
\end{equation}
where
$\eta=(\eta_1,\dots,\eta_d)\in\mathbb{N}_0^d$ are indices,
$p_{\eta_1}^{a_1,b_1}(z_1)$ is the univariate Jacobi orthogonal polynomial defined in \eqref{pjacobi1}, with parameters
$$\begin{aligned}
& a_1=2(\eta_2+\cdots+\eta_d) + (\gamma_2+\cdots+\gamma_{d+1}) + d-1,
\\
& b_1=\gamma_1,
\end{aligned}
$$
$P_{\eta_2,\dots,\eta_d}^{\gamma_2,\dots,\gamma_{d+1}}(z_2,\dots,z_d)$ are the orthogonal polynomials on the $(d-1)$--dimensional simplex,
and the variables $x$ and $z$ are related as follows
\begin{equation}\label{chvar}
\begin{aligned}
& z_1=x_1,\\
& z_j=\frac{x_j}{1-x_1}, \quad\text{ for }\quad j=2,\dots,d.
\end{aligned}
\end{equation}
The polynomials $\{P_\eta^\gamma(x), \eta\in\mathbb{N}_0^d\}$ can be characterized as common eigenfunctions of $d$ commuting partial differential operators. More precisely, if we define
\begin{equation}\label{diffeq}
\begin{aligned}
\mathcal{M}_{j,d}^\gamma =&  \sum_{k=j}^d (1-|\boldsymbol{x}_{j-1}|-x_k) x_k \partial^2_{x_k} -2 \sum_{j\leqslant k <l\leqslant d} x_k x_l \partial_{x_k}\partial_{x_l}
\\
&+\sum_{k=j}^d [(\gamma_k+1)(1-|\boldsymbol{x}_{j-1}|) - (|{\boldsymbol{\gamma}}^j|+d-j+2)x_k]\partial_{x_k},
\end{aligned}
\end{equation}
then
\begin{equation}\label{speceq}
\mathcal{M}_{j,d}^\gamma [P_\eta^\gamma(x)] = \lambda_{|\boldsymbol{\eta}^j|}^{|\boldsymbol{\gamma}^j| +d+1-j} P_\eta^\gamma(x), \quad\text{ for }\quad  j=1,\dots,d,
\end{equation}
where the eigenvalue $\lambda_{|\boldsymbol{\eta}^j|}^{|\boldsymbol{\gamma}^j| +d+1-j} = - |\boldsymbol{\eta}^j| (|\boldsymbol{\eta}^j| + |\boldsymbol{\gamma}^j| +d+1-j )$ is defined in \eqref{lambda}, see \cite[Proposition~3.3]{I17}.

\begin{remark}\label{rem:ind}
Note that the operator $\mathcal{M}_{j,d}^\gamma$ is independent of the parameters $\gamma_{1},\dots,\gamma_{j-1}$.
\end{remark}
\begin{remark}\label{rem:Lau}
For $j=1$ we obtain the {\em Appell-Lauricella operator}  $\mathcal{M}_{1,d}^\gamma=\mathcal{M}_{d}^\gamma$ discussed in the introduction
\begin{equation}\label{diffeq1}
\begin{aligned}
\mathcal{M}_{d}^\gamma =&  \sum_{k=1}^d (1-x_k) x_k \partial^2_{x_k} -2 \sum_{1\leqslant k <l\leqslant d} x_k x_l \partial_{x_k}\partial_{x_l}
\\
&+\sum_{k=1}^d [(\gamma_k+1) - (|\gamma|+d+1)x_k]\partial_{x_k}.
\end{aligned}
\end{equation}
This operator has been extensively studied in the literature. When $j=1$, the eigenvalue in \eqref{speceq} depends only on the total degree of the polynomial $P_\eta^\gamma(x)$. Therefore, if $\mathcal{V}_n^d$ denotes the space of polynomials of total degree $n$ orthogonal to all polynomials of degree at most $n-1$ with respect to $W(x)$, then for every $R(x)\in \mathcal{V}_n^d$ we have
\begin{equation}\label{speceq2}
\mathcal{M}_{d}^\gamma [R(x)] = -n (n + |{\gamma}| +d) \, R(x).
\end{equation}
In dimension 2, the first (nonorthogonal) bases of $\mathcal{V}_n^2$ were constructed by Appell \cite{A1881} in terms of the hypergeometric functions $F_2$, and equation~\eqref{speceq2} can be deduced by adding the differential equations satisfied by $F_2$. In 1893, Lauricella \cite{L1893} extended the hypergeometric functions introduced by Appell \cite{A1882} and derived the partial differential equations satisfied by them. Biorthogonal bases of $\mathcal{V}_n^d$ for arbitrary $d$, which extend the constructions of Appell, can be defined in terms of the Lauricella functions $F_A$ and the operator $\mathcal{M}_{d}^\gamma$ can be obtained by adding the differential equations satisfied by $F_A$, see \cite{KMT}. Equation~\eqref{speceq2} also explains why the operator $\mathcal{M}_{2}^\gamma$ naturally appears in the  Krall-Sheffer classification of bivariate analogs of the classical orthogonal polynomials \cite{KS67}; see also \cite{DX14,Su99}.
\end{remark}

\begin{remark}\label{rem:Int1}
By changing the variables $x_j=y_j^2$, and after an appropriate gauge transformation, up to an additive constant, the operator $4\mathcal{M}_{d}^\gamma$ can be rewritten as
\begin{equation}\label{H1}
\mathcal{H}=\Delta_{\mathbb{S}^{d}}+V(y),
\end{equation}
where
$\Delta_{\mathbb{S}^{d}}$ is the Laplace-Beltrami operator on the sphere
$$\mathbb{S}^{d}=\{y\in\mathbb{R}^{d+1}:y_1^2+\cdots+y_{d+1}^2=1\}$$
with potential
$$V(y)=\frac{1}{4}\sum_{k=1}^{d+1}\frac{1-4\gamma_k^2}{y_k^2},$$
see \cite{KMT}. Under the same change of variables and gauge transformation, the operators $\mathcal{M}_{j,d}^\gamma$, for $j=2,\dots,d$ together with $\mathcal{H}$ provide $d$ algebraically independent and mutually commuting integrals of motion, thus showing that the system is completely integrable. Moreover, one can show that operators
\begin{align}
&\hat{\mathcal{H}}_{i,j}=x_ix_j(\pd_{x_i}-\pd_{x_j})^2+[(\gamma_i+1)x_j-(\gamma_j+1)x_i](\pd_{x_i}-\pd_{x_j}), \nonumber\\
&\hskip7cm  \text{ if }1\leq i<j\leq d,\label{H2}\\
\intertext{and}
&\hat{\mathcal{H}}_{j,d+1}=x_j(1-|x|)\pd_{x_j}^2+[(\gamma_j+1)(1-|x|)-(\gamma_{d+1}+1)x_j]\pd_{x_j}, \nonumber\\
&\hskip7cm  \text{ if }1\leq j\leq d,\label{H3}
\end{align}
also commute with $\mathcal{M}_{d}^\gamma$, and therefore if we apply the same change of variables and gauge transformation to them, they will also provide integrals of motion for $\mathcal{H}$. Recall that classical or quantum Hamiltonian systems, possessing more than $d$ algebraically independent integrals of motion are usually referred to as superintegrable systems \cite{MPW}.
The system with Hamiltonian $\mathcal{H}$ in \eqref{H1} has been extensively studied in the literature as an important example of a second-order superintegrable system, possessing the maximal possible number of algebraically independent second-order integrals of motion. It is usually referred to as the {\em generic quantum superintegrable system on the sphere}, and has attracted a lot of attention recently in connection to multivariate extensions of the Askey scheme of hypergeometric orthogonal polynomials and their bispectral properties, the Racah problem for $\mathfrak{su}(1,1)$, representations of the Kohno-Drinfeld algebra, the Laplace-Dunkl operator associated with $\mathbb{Z}_2^{d+1}$ root system; see for instance \cite{DGVV,I17,KMP2} and the references therein. The space $\mathcal{V}_n^d$ introduced in Remark~\ref{rem:Int1} appears naturally in the analysis as an irreducible module over the associative algebra generated by the integrals of motion, see \cite{I18}.
\end{remark}

Using the change of variables \eqref{chvar} we can decompose the operator $\mathcal{M}_d^\gamma (x)$ as follows
$$
\mathcal{M}_d^\gamma (x) = \mathcal{M}_1^{|\boldsymbol\gamma^2|+d-1,\gamma_1}(z_1) + \frac{1}{1-z_1} \mathcal{M}_{d-1}^{\gamma_2,\ldots,\gamma_{d+1}} (z_2,\dots,z_d).
$$
Furthermore, using the identity in \cite[p. 2037]{I17}, we see that
\begin{equation}\label{eq:meqm}
\mathcal{M}_{2,d}^\gamma(x)=\mathcal{M}_{1,d-1}^{\gamma_2,\ldots,\gamma_{d+1}}(z_2,\dots,z_d) = \mathcal{M}_{d-1}^{\gamma_2,\ldots,\gamma_{d+1}}(z_2,\dots,z_d).
\end{equation}

Next, we define the partial differential operators
\begin{align}
\hat{D}_1 =& (x_1-1)\partial_{x_1} + \sum_{j=2}^d x_j\partial_{x_j},\label{eq:Dh1}
\\
\hat{D}_2 =& (1-x_1)\partial_{x_1}^2 + \sum_{j=2}^d x_j\partial_{x_j}^2 - 2 \sum_{j=2}^d (x_j \partial_{x_j})\partial_{x_1}\nonumber
\\
&
- (|\boldsymbol\gamma^2|+d) \partial_{x_1}  + \sum_{j=2}^d (\gamma_j+1)\partial_{x_j},\label{eq:Dh2}
\end{align}
which can be considered as multivariable extensions of the operators $\mathcal{D}_1$ and $\mathcal{D}_2$ defined in \eqref{D1D2},
and we denote by
$$\hat{\mathfrak{D}}_\gamma = \mathbb{R}\langle\hat{D}_1,\hat{D}_2\rangle$$
the associative algebra generated by the differential operators $\hat{D}_1$ and $\hat{D}_2$.
It is straightforward to check  that these operators satisfy the commutativity relation
$$[\hat{D}_2,\hat{D}_1]=\hat{D}_2$$
which combined with \eqref{W1} suggests defining the map
\begin{equation}\label{map}
\Phi:\quad \begin{matrix} \mathcal{D}_1\to \hat{D}_1\;\\  \mathcal{D}_2\to \hat{D}_2.
\end{matrix}
\end{equation}
It is not hard to see that $\Phi$ extends to an isomorphism from $\mathfrak{D}_\alpha$ onto $\hat{\mathfrak{D}}_\gamma$. This isomorphism depends on the parameters $\alpha,\gamma_2,\gamma_3,\dots,\gamma_{d+1}$, but since the parameters will be usually fixed and will not cause any confusion, we will omit this explicit dependence.

With these notations, a straightforward computation shows that the Appell-Lauricella operator $\mathcal{M}_d^\gamma$ defined in \eqref{diffeq1} can be written in terms of the operators $\hat{D}_1$ and $\hat{D}_2$ as follows
\begin{equation}\label{eq:ALop}
\mathcal{M}_d^\gamma=\hat{D}_2- \hat{D}_1^2 - (|\gamma|+d)\hat{D}_1.
\end{equation}
Combining the last equation with \eqref{M1D1D2} we see that if the parameters satisfy the relation
\begin{equation}\label{eq:condab}
\alpha+\beta+1=|\gamma|+d,
\end{equation}
then $\Phi$ maps the (one-variable) hypergeometric operator $\mathcal{M}_1^{\alpha,\beta}$ into the Appell-Lauricella partial differential operator $\mathcal{M}_d^\gamma$, i.e.
\begin{equation}\label{eq:JtoAL}
\Phi(\mathcal{M}_1^{\alpha,\beta})=\mathcal{M}_d^\gamma.
\end{equation}

The main point now is that this map can be extended to the Krall commutative algebras  $\mathfrak{D}^{\alpha,\beta;\psi}$ constructed in Theorem~\ref{th1}. This leads to commutative algebras of partial differential operators which are Darboux transformations from polynomials of the Appell-Lauricella operator.  Moreover, the operators in these algebras will commute with the operators $\{\mathcal{M}_{j,d}^\gamma\}_{j\geq 2}$ defined in \eqref{diffeq} and all these operators can be simultaneously diagonalized on the space of polynomials in terms of extensions of the Jacobi polynomials on the simplex.

\subsection{Darboux transformations}\label{ss3.2}
Applying the isomorphism $\Phi$ to the commutative algebra consisting of the ordinary differential operators $\mathcal{B}_f(\mathcal{D}_1,\mathcal{D}_2)$ constructed in Theorem~\ref{th1} we obtain a commutative algebra consisting of partial differential operators $\mathcal{B}_f(\hat{D}_1,\hat{D}_2)$.
From now on, we will assume that the parameters $(\alpha,\beta)$ and  $\gamma$ are related as follows:
\begin{equation}\label{eq:param}
\begin{split}
&\alpha=\gamma_2+\cdots+\gamma_{d+1}+d-1,\\
&\beta=\gamma_1.
\end{split}
\end{equation}
Note that \eqref{eq:condab} is satisfied and therefore, equation~\eqref{eq:JtoAL} also holds. Applying the isomorphism $\Phi$ to \eqref{D1} we obtain the following multivariable extension of Corollary~\ref{co:1DDarboux}.

\begin{proposition} \label{pr:multiDarboux}
With $\alpha$ defined in \eqref{eq:param}, and for $f\in\mathfrak{A}^{\alpha,\gamma_1;\psi}$, the operators $\mathcal{B}_f(\hat{D}_1,\hat{D}_2)$ and $f(\mathcal{M}_d^{\gamma-k \mathbf{e}_1}+\lambda_{k/2}^{-|\gamma|-d})$ satisfy the intertwining relation
\begin{equation}\label{DDD1}
\mathcal{B}_f(\hat{D}_1,\hat{D}_2)\,\hat{\mathcal{B}}_{\psi}(\hat{D}_1,\hat{D}_2)= \hat{\mathcal{B}}_{\psi}(\hat{D}_1,\hat{D}_2)\, f(\mathcal{M}_d^{\gamma-k \mathbf{e}_1}+\lambda_{k/2}^{-|\gamma|-d}),
\end{equation}
where $\hat{\mathcal{B}}_{\psi}(\hat{D}_1,\hat{D}_2)$ is the image under $\Phi$ of the operator $\hat{\mathcal{B}}_{\psi}(\mathcal{D}_1,\mathcal{D}_2)$ constructed in Proposition~\ref{prop:conn}, and $\mathbf{e}_1=(1,0,\dots,0)$.
\end{proposition}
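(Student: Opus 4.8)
The plan is to deduce \eqref{DDD1} directly from the one-variable intertwining relation \eqref{D1} of Corollary~\ref{co:1DDarboux} by applying the algebra isomorphism $\Phi:\mathfrak{D}_\alpha\to\hat{\mathfrak{D}}_\gamma$, once we have checked that \eqref{D1} is an identity between genuine elements of $\mathfrak{D}_\alpha$ and once we have matched the parameters correctly. Since $\Phi$ is, by construction, an isomorphism of associative algebras sending $\mathcal{D}_i\mapsto\hat D_i$, it automatically transports products to products and fixes scalars, so the whole content of the proof is reduced to identifying the images under $\Phi$ of the three operators occurring in \eqref{D1}.

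First I would note that every operator appearing in \eqref{D1} lies in $\mathfrak{D}_\alpha$: the operator $\mathcal{B}_f(\mathcal{D}_1,\mathcal{D}_2)$ lies in $\mathfrak{D}_\alpha$ by Theorem~\ref{th1}, the intertwining operator $\Bh_{\psi}(\mathcal{D}_1,\mathcal{D}_2)$ lies in $\mathfrak{D}_\alpha$ by Proposition~\ref{prop:conn}, and by \eqref{N3} with $s=-k$ we have $\mathcal{M}_1^{\alpha,\beta-k}=\mathcal{D}_2-\mathcal{D}_1^2-(\alpha+\beta-k+1)\mathcal{D}_1\in\mathfrak{D}_\alpha$, whence $f(\mathcal{M}_1^{\alpha,\beta-k}+\lambda_{k/2}^{-\alpha-\beta-1})\in\mathfrak{D}_\alpha$ because $f$ is a polynomial and $\mathfrak{D}_\alpha$ is an algebra. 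Applying $\Phi$ to \eqref{D1} and using the homomorphism property, the left side becomes $\mathcal{B}_f(\hat D_1,\hat D_2)\,\Bh_{\psi}(\hat D_1,\hat D_2)=\mathcal{B}_f(\hat D_1,\hat D_2)\,\hat{\mathcal{B}}_{\psi}(\hat D_1,\hat D_2)$ and the first factor on the right becomes $\hat{\mathcal{B}}_{\psi}(\hat D_1,\hat D_2)$. For the last factor I would compute $\Phi(\mathcal{M}_1^{\alpha,\beta-k})=\hat D_2-\hat D_1^2-(\alpha+\beta-k+1)\hat D_1$ from the displayed expression above, then use the parametrization \eqref{eq:param}, which gives $\alpha+\beta+1=|\gamma|+d$ (this is \eqref{eq:condab}) and hence $\alpha+\beta-k+1=|\gamma|+d-k=|\gamma-k\mathbf{e}_1|+d$. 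Since the operators $\hat D_1,\hat D_2$ do not involve $\gamma_1=\beta$ (they depend only on $\alpha$, i.e.\ on $\gamma_2,\dots,\gamma_{d+1}$), formula \eqref{eq:ALop} applied with parameter vector $\gamma-k\mathbf{e}_1$ reads $\mathcal{M}_d^{\gamma-k\mathbf{e}_1}=\hat D_2-\hat D_1^2-(|\gamma-k\mathbf{e}_1|+d)\hat D_1$, so $\Phi(\mathcal{M}_1^{\alpha,\beta-k})=\mathcal{M}_d^{\gamma-k\mathbf{e}_1}$. As $\Phi$ fixes scalars and $-\alpha-\beta-1=-|\gamma|-d$, we get $\Phi\bigl(f(\mathcal{M}_1^{\alpha,\beta-k}+\lambda_{k/2}^{-\alpha-\beta-1})\bigr)=f(\mathcal{M}_d^{\gamma-k\mathbf{e}_1}+\lambda_{k/2}^{-|\gamma|-d})$. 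Substituting these three images into the image of \eqref{D1} yields exactly \eqref{DDD1}.

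The proof is therefore essentially a bookkeeping argument, and the only points requiring genuine care are: (i) the membership $\mathcal{M}_1^{\alpha,\beta-k}\in\mathfrak{D}_\alpha$ coming from \eqref{N3}, which is precisely the structural fact that makes it possible to transport the Darboux construction to the multivariable setting via $\Phi$; and (ii) tracking the parameter shift $\gamma\mapsto\gamma-k\mathbf{e}_1$ and verifying that it leaves $\hat D_1$ and $\hat D_2$ unchanged while shifting $|\gamma|$ by $-k$, so that the eigenvalue-type constant $\lambda_{k/2}^{-\alpha-\beta-1}$ and the ``gauge'' coefficient $\alpha+\beta-k+1$ match the constants in the Appell-Lauricella operator $\mathcal{M}_d^{\gamma-k\mathbf{e}_1}$. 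Once these are observed, the conclusion is immediate from the fact that $\Phi$ is an algebra homomorphism.
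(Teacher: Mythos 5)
Your proof is correct and follows exactly the paper's route: the paper obtains Proposition~\ref{pr:multiDarboux} precisely by applying the isomorphism $\Phi$ to the one-variable intertwining relation \eqref{D1}, using \eqref{eq:param}, \eqref{eq:condab} and \eqref{eq:ALop} to identify $\Phi(\mathcal{M}_1^{\alpha,\beta-k})=\mathcal{M}_d^{\gamma-k\mathbf{e}_1}$ and $\lambda_{k/2}^{-\alpha-\beta-1}=\lambda_{k/2}^{-|\gamma|-d}$. Your additional bookkeeping (membership of all three operators in $\mathfrak{D}_\alpha$ and the observation that $\hat D_1,\hat D_2$ do not involve $\gamma_1$) is exactly the verification the paper leaves implicit.
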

Next, we will show that multivariable polynomials which diagonalize the operators $\mathcal{B}_f(\hat{D}_1,\hat{D}_2)$ in the last proposition can be defined in terms of appropriate extensions of the multivariable Jacobi polynomials on the simplex.
\begin{definition}\label{def:newP}
For $\eta\in\Nset_0^d$, $x\in\Rset^d$ and parameters $\gamma\in\Rset^{d+1}$ we define polynomials
\begin{equation}\label{Qeta}
Q_\eta^\gamma (x) = \hat{q}_{\eta_1,\eta_2+\cdots+\eta_d}^{\alpha,\beta;\psi} (z_1) (1-z_1)^{\eta_2+\cdots+\eta_d} P_{\eta_2,\dots,\eta_d}^{\gamma_2,\dots,\gamma_{d+1}}(z_2,\dots,z_d),
\end{equation}
where the variables $x$ and $z$ are related as in \eqref{chvar}, the polynomials $\hat{q}_{\eta_1,\eta_2+\dots+\eta_d}^{\alpha,\beta;\psi}$ are defined in \eqref{qcup}, and the parameters $\alpha,\beta$ are given in \eqref{eq:param}.
\end{definition}

\begin{proposition}\label{prop:BhsMulti}
The polynomials $Q_\eta^\gamma(x)$ in Definition~\ref{def:newP} can be related to the Jacobi polynomials on the simplex~\eqref{simplexp} via the formula
\begin{equation}\label{qgorros2}
Q_\eta^\gamma(x) = \hat{c}^{|\gamma|+d-1}_{\eta_1,k,s}\,\hat{\mathcal{B}}_\psi(\hat{D}_1,\hat{D}_2) [ P_\eta^{\gamma-k\mathbf{e}_1}(x)],
\end{equation}
where $s=|\boldsymbol\eta^2|=\eta_2+\cdots+\eta_d$ and  $\hat{c}^{|\gamma|+d-1}_{\eta_1,k,s}$ is defined in \eqref{eq:chs}
\end{proposition}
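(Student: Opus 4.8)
The strategy is to reduce the multivariable identity \eqref{qgorros2} to the one-dimensional identity in Proposition~\ref{prop:Bhs} by exploiting the product structure of both sides under the change of variables \eqref{chvar}. First I would substitute the defining formula \eqref{Qeta} for $Q_\eta^\gamma(x)$ together with the defining formula \eqref{simplexp} for $P_\eta^{\gamma-k\mathbf{e}_1}(x)$, noting that shifting $\gamma\mapsto\gamma-k\mathbf{e}_1$ only changes the first Jacobi parameter $b_1=\gamma_1$ to $\gamma_1-k$, leaving $a_1 = |\boldsymbol\gamma^2|+d-1 = \alpha$ and the lower-dimensional factor $P_{\eta_2,\dots,\eta_d}^{\gamma_2,\dots,\gamma_{d+1}}(z_2,\dots,z_d)$ untouched. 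Thus both sides of \eqref{qgorros2}, when written in the $z$ variables, are of the form (something in $z_1$) times $(1-z_1)^{?}\,P_{\eta_2,\dots,\eta_d}^{\gamma_2,\dots,\gamma_{d+1}}(z_2,\dots,z_d)$, with $s = |\boldsymbol\eta^2| = \eta_2+\cdots+\eta_d$ acting as the extra parameter in Proposition~\ref{prop:Bhs}.

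The core step is to understand how the partial differential operator $\hat{\mathcal{B}}_\psi(\hat{D}_1,\hat{D}_2)$ acts on functions of this product form. Since $\hat{\mathcal{B}}_\psi(\hat{D}_1,\hat{D}_2)$ is a polynomial in $\hat{D}_1$ and $\hat{D}_2$, it suffices to track the action of $\hat{D}_1$ and $\hat{D}_2$ themselves. In the $z$ coordinates one computes (this is where a short but essential calculation is needed, parallel to the decomposition of $\mathcal{M}_d^\gamma$ recorded just before \eqref{eq:meqm}) that on a function $g(z_1)(1-z_1)^s h(z_2,\dots,z_d)$ the operator $\hat{D}_1$ acts only through the $z_1$ variable, reproducing $\mathcal{D}_1$ acting on $g(z_1)(1-z_1)^s$ (the factor $(1-z_1)^s h$ being carried along appropriately), and similarly $\hat{D}_2$ reproduces $\mathcal{D}_{2,s}$ — precisely the shifted operator \eqref{D2s} — acting on $g(z_1)$, with the $(1-z_1)^s$ weight and the lower-dimensional factor $h$ passing through. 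This is the multivariable analog of the fact that $\Phi$ sends $\mathcal{D}_2$ to $\hat{D}_2$; here the $s$-dependence enters because the Jacobi parameter $a_1$ of the inner polynomial depends on $\eta_2+\cdots+\eta_d = s$, which is exactly the shift built into $\mathcal{D}_{2,s}$. Consequently $\hat{\mathcal{B}}_\psi(\hat{D}_1,\hat{D}_2)$ applied to $P_\eta^{\gamma-k\mathbf{e}_1}(x)$ equals $\hat{\mathcal{B}}_\psi(\mathcal{D}_1,\mathcal{D}_{2,s})$ applied to $p_{\eta_1}^{\alpha+2s,\gamma_1-k}(z_1)(1-z_1)^s$, times the intact factor $P_{\eta_2,\dots,\eta_d}^{\gamma_2,\dots,\gamma_{d+1}}(z_2,\dots,z_d)$.

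Having isolated the $z_1$ dependence, I would then invoke Proposition~\ref{prop:Bhs} directly: with $\alpha+\beta = |\gamma|+d-1$ (from \eqref{eq:param}, which gives $\alpha+\beta+1=|\gamma|+d$) and the parameter $s = |\boldsymbol\eta^2|$, equation \eqref{qgorros} states exactly that
$$
\hat{q}_{\eta_1,s}^{\alpha,\gamma_1;\psi}(z_1)\,(1-z_1)^s = \hat{c}_{\eta_1,k,s}^{|\gamma|+d-1}\,\hat{\mathcal{B}}_\psi(\mathcal{D}_1,\mathcal{D}_{2,s})\bigl[p_{\eta_1}^{\alpha+2s,\gamma_1-k}(z_1)\,(1-z_1)^s\bigr].
$$
Multiplying both sides by $(1-z_1)^{\text{(appropriate power)}}P_{\eta_2,\dots,\eta_d}^{\gamma_2,\dots,\gamma_{d+1}}(z_2,\dots,z_d)$ and matching with the definitions \eqref{Qeta} and \eqref{simplexp} yields \eqref{qgorros2}, with the constant $\hat{c}_{\eta_1,k,s}^{|\gamma|+d-1}$ as claimed.

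**Main obstacle.** The one genuinely delicate point is verifying that $\hat{D}_1$ and $\hat{D}_2$, expressed in the $z$ variables, restrict correctly to $\mathcal{D}_1$ and $\mathcal{D}_{2,s}$ on the $z_1$-factor while leaving the factor $(1-z_1)^s P_{\eta_2,\dots,\eta_d}^{\gamma_2,\dots,\gamma_{d+1}}(z_2,\dots,z_d)$ inert — in particular that the cross terms $-2\sum_{j\geq 2}(x_j\partial_{x_j})\partial_{x_1}$ and the lower-order terms in \eqref{eq:Dh2} conspire to produce exactly the $-\frac{s(s+\alpha)}{1-z_1}$ correction of \eqref{D2s} and nothing else. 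This is entirely analogous to the computation behind \eqref{eq:meqm} and the decomposition of $\mathcal{M}_d^\gamma$ recorded in the text, so I expect it to go through, but it is the step that carries the real content and where sign or bookkeeping errors are most likely.
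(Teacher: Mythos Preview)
Your overall strategy is exactly the paper's: pass to the $z$ coordinates, reduce the action of $\hat{\mathcal{B}}_\psi(\hat D_1,\hat D_2)$ on $P_\eta^{\gamma-k\mathbf e_1}(x)$ to the action of $\hat{\mathcal{B}}_\psi(\mathcal D_1,\mathcal D_{2,s})$ on the $z_1$-factor, and then invoke Proposition~\ref{prop:Bhs}. However, your description of the ``main obstacle'' misidentifies the mechanism, and the claim as you state it is actually false for a general factor $h(z_2,\dots,z_d)$.

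In the $z$ coordinates one finds, for an arbitrary product $p(z_1)\,q(z_2,\dots,z_d)$,
\[
\hat D_1[p\,q]=\mathcal D_1[p]\,q,\qquad
\hat D_2[p\,q]=\mathcal D_2[p]\,q+\frac{p}{1-z_1}\,\mathcal M_{d-1}^{\gamma_2,\dots,\gamma_{d+1}}[q].
\]
The lower-dimensional factor is therefore \emph{not} inert under $\hat D_2$: the cross terms you mention reorganize into $\frac{1}{1-z_1}\mathcal M_{d-1}$ acting on $q$, not into the scalar $-\frac{s(s+\alpha)}{1-z_1}$. The reduction $\hat D_2\rightsquigarrow\mathcal D_{2,s}$ happens only because $q=P_{\eta_2,\dots,\eta_d}^{\gamma_2,\dots,\gamma_{d+1}}$ is an eigenfunction of $\mathcal M_{d-1}^{\gamma_2,\dots,\gamma_{d+1}}$ with eigenvalue $\lambda_s^{|\boldsymbol\gamma^2|+d-1}=-s(s+\alpha)$ (equation~\eqref{speceq} with $j=1$, $d\mapsto d-1$). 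That spectral equation is the missing ingredient; once you insert it, each application of $\hat D_1$ or $\hat D_2$ returns a function of the same product form, so the iteration to $\hat{\mathcal B}_\psi$ goes through and Proposition~\ref{prop:Bhs} finishes the proof exactly as you outlined. (As a sanity check: with $q\equiv 1$ and $s\neq 0$ your asserted identity $\hat D_2[g(z_1)(1-z_1)^s\cdot 1]=\mathcal D_{2,s}[g(1-z_1)^s]$ fails, since $\mathcal M_{d-1}[1]=0$.)
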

\begin{proof}
With $\alpha=|\boldsymbol\gamma^2|+d-1$, the differential operators in \eqref{D1D2} take the form
\begin{equation}
\mathcal{D}_1 =\mathcal{D}_1(z_1)= (z_1-1)\partial_{z_1} \quad \text{ and } \quad \mathcal{D}_2=\mathcal{D}_2(z_1)=(1-z_1)\partial_{z_1}^2 - (|\boldsymbol\gamma^2|+d)\partial_{z_1}.
\end{equation}
First, we see how the operators $\hat{D}_1$ and $\hat{D}_2$ act on polynomials $P(x)$ which, via the change of variables \eqref{chvar},  can be factored in separated variables as
$$P(x)=p(z_1)q(z_2,\dots,z_d).$$
A straightforward computation shows that
\begin{equation}\label{d1fact}
\hat{D}_1[P(x)] = \mathcal{D}_1[p(z_1)] q(z_2,\dots,z_d)
\end{equation}
and
\begin{equation}\label{d2fact}
\hat{D}_2[P(x)] =
\mathcal{D}_2[p(z_1)] q(z_2,\dots,z_d)
+ \frac{p(z_1)}{1-z_1}\mathcal{M}_{d-1}^{\gamma_2,\ldots,\gamma_{d+1}}[q(z_2,\dots,z_d)],
\end{equation}
where $\mathcal{M}_{d-1}^{\gamma_2,\ldots,\gamma_{d+1}}=\mathcal{M}_{d-1}^{\gamma_2,\ldots,\gamma_{d+1}}(z_2\dots,z_d)$ is the operator in \eqref{diffeq1} for the $(d-1)$ dimensional simplex in the variables $z_2,\dots,z_d$.
Applying the last formula to the polynomial $P_\eta^{\gamma-k\mathbf{e}_1} (x)$ defined in \eqref{simplexp} we obtain
\begin{equation}\label{pr1}
\begin{aligned}
\hat{D}_2 & [P_\eta^{\gamma-k\mathbf{e}_1} (x) ]  =
\mathcal{D}_2[p_{\eta_1}^{a_1,b_1-k} (z_1) (1-z_1)^s ] P_{\eta_2,\dots,\eta_d}^{\gamma_2,\dots,\gamma_{d+1}}(z_2,\dots,z_d)
\\
&+ \frac{p_{\eta_1}^{a_1,b_1-k}  (z_1) (1-z_1)^s}{1-z_1}\mathcal{M}_{d-1}^{\gamma_2,\ldots,\gamma_{d+1}}[P_{\eta_2,\dots,\eta_d}^{\gamma_2,\dots,\gamma_{d+1}}(z_2,\dots,z_d)].
\end{aligned}
\end{equation}
Equation \eqref{speceq} with $j=1$ and $d$ replaced by $d-1$ yields
\begin{equation}\label{pr2}
\mathcal{M}_{d-1}^{\gamma_2,\ldots,\gamma_{d+1}}[P_{\eta_2,\dots,\eta_d}^{\gamma_2,\dots,\gamma_{d+1}}(z_2,\dots,z_d)] = \lambda_{|\boldsymbol{\eta}^2|}^{|\boldsymbol{\gamma}^2| +d-1}P_{\eta_2,\dots,\eta_d}^{\gamma_2,\dots,\gamma_{d+1}}(z_2,\dots,z_d).
\end{equation}
Since $|\boldsymbol{\eta}^2|=s$ and $\lambda_{s}^{|\boldsymbol{\gamma}^2| +d-1} = -s(s+\gamma_2+\dots+\gamma_{d+1}+d-1)=-s(s+\alpha)$, we can see from \eqref{pr1}, \eqref{pr2} and \eqref{D2s} that
$$
\hat{D}_2 [P_\eta^{\gamma-k\mathbf{e}_1} (x)]  = \mathcal{D}_{2,s}[p_{\eta_1}^{a_1,b_1-k} (z_1) (1-z_1)^s ] P_{\eta_2,\dots,\eta_d}^{\gamma_2,\dots,\gamma_{d+1}}(z_2,\dots,z_d).
$$
The proof now follows from Proposition~\ref{prop:Bhs}.
 \end{proof}

\begin{theorem}\label{th:q}
The polynomials $\{Q_\eta^\gamma (x),\eta\in\mathbb{N}_0^d \}$ form a basis for the space $\Rset[x_1,\dots,x_d]$ of polynomials in $d$ variables. Furthermore, for $f\in\mathfrak{A}^{\alpha,\beta;\psi}$ we have
\begin{equation}\label{eq:Qeta}
\mathcal{B}_f(\hat{D}_1,\hat{D}_{2}) [Q_{\eta}^\gamma(x)] = f\big(\lambda^{|\gamma|+d}_{|\eta|-k/2}\big)Q_{\eta}^\gamma(x),
\end{equation}
where $\mathcal{B}_f(\hat{D}_1,\hat{D}_{2})$ is obtained from the operator $\mathcal{B}_f(\mathcal{D}_1,\mathcal{D}_2)$ in Theorem~\ref{th1}
by applying the map \eqref{map}, and $\lambda^{|\gamma|+d}_{|\eta|-k/2}$ is given in \eqref{lambda}.
\end{theorem}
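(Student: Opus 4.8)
\emph{Proof strategy.}
The plan is to obtain both assertions by combining the three ingredients already available: the factorization \eqref{qgorros2} of $Q_\eta^\gamma$ through the Jacobi polynomials on the simplex, the intertwining relation \eqref{DDD1}, and the spectral equation \eqref{speceq2} for the Appell--Lauricella operator.

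For the eigenvalue equation \eqref{eq:Qeta}, I would start from \eqref{qgorros2}, apply $\mathcal{B}_f(\hat D_1,\hat D_2)$ to both sides, and move it past $\hat{\mathcal B}_\psi(\hat D_1,\hat D_2)$ using \eqref{DDD1}; this replaces $\mathcal{B}_f$ by $f(\mathcal{M}_d^{\gamma-k\mathbf{e}_1}+\lambda_{k/2}^{-|\gamma|-d})$ acting on $P_\eta^{\gamma-k\mathbf{e}_1}(x)$. Applying \eqref{speceq2} with $\gamma$ replaced by $\gamma-k\mathbf{e}_1$ (so that $|\gamma-k\mathbf{e}_1|=|\gamma|-k$) shows that $P_\eta^{\gamma-k\mathbf{e}_1}(x)$ is an eigenfunction of $\mathcal{M}_d^{\gamma-k\mathbf{e}_1}$ with eigenvalue $\lambda^{|\gamma|-k+d}_{|\eta|}$, so the polynomial $f$ of this operator acts on it by the scalar $f(\lambda^{|\gamma|-k+d}_{|\eta|}+\lambda_{k/2}^{-|\gamma|-d})$. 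By \eqref{eq:laid} together with \eqref{eq:condab} (valid here because of \eqref{eq:param}) this scalar equals $f(\lambda^{|\gamma|+d}_{|\eta|-k/2})$, and folding the computation back through \eqref{qgorros2} produces \eqref{eq:Qeta}.

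For the basis property, I would expand the discrete Wronskian defining $\hat q_{\eta_1,s}^{\alpha,\beta;\psi}$ in \eqref{qcup} along the row containing the Jacobi polynomials. Writing $\psi_n^{(j)}=(-1)^nf_n^{(j)}$ with $f_n^{(j)}\in\Rset[\lambda_n^{\alpha+\beta+1}]$, this yields an expansion $\hat q_{\eta_1,s}^{\alpha,\beta;\psi}(t)=\sum_{l=0}^{k}c_l\,p_{\eta_1-l}^{\alpha+2s,\beta}(t)$ with scalar coefficients $c_l=c_l(\eta_1,s)$, in which $c_0$ equals, up to sign and a shift of the index, the discrete Wronskian $\tau$ of \eqref{tau} and is therefore nonzero. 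Substituting this into \eqref{Qeta} and matching it term by term with \eqref{simplexp} --- with $s=\eta_2+\cdots+\eta_d$ the factors $(1-z_1)^s$ and $P^{\gamma_2,\dots,\gamma_{d+1}}_{\eta_2,\dots,\eta_d}$ and the Jacobi parameters $a_1=\alpha+2s$, $b_1=\beta$ all coincide --- gives $Q_\eta^\gamma(x)=\sum_{l=0}^{k}c_l\,P^\gamma_{(\eta_1-l,\eta_2,\dots,\eta_d)}(x)$. Hence the transition from the simplex basis $\{P_\eta^\gamma\}$, which is a basis of $\Rset[x_1,\dots,x_d]$ by \cite[Section~5.3]{DX14}, to $\{Q_\eta^\gamma\}$ is block-diagonal in $(\eta_2,\dots,\eta_d)$ and lower triangular in $\eta_1$ with nonzero diagonal entries $c_0(\eta_1,s)$; it is invertible, so $\{Q_\eta^\gamma:\eta\in\Nset_0^d\}$ is again a basis.

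The main difficulty is organizational rather than conceptual, since all the analytic work is carried by Propositions~\ref{prop:BhsMulti} and \ref{pr:multiDarboux}: one must keep the parameter shift $\gamma\mapsto\gamma-k\mathbf{e}_1$ consistent with the normalization \eqref{eq:param}, verify that the purely algebraic spectral equation \eqref{speceq}, and hence \eqref{speceq2}, survives this shift for generic $\gamma$, and check the eigenvalue identity. The only genuine algebraic point in the basis argument is that $c_0\neq0$, which relies precisely on \eqref{tau}, with the harmless convention $p^{\alpha+2s,\beta}_m=0$ for $m<0$ when $\eta_1<k$.
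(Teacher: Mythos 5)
Your proposal is correct. The proof of the spectral equation \eqref{eq:Qeta} is essentially identical to the paper's: both apply $\mathcal{B}_f(\hat D_1,\hat D_2)$ to \eqref{qgorros2}, pass it through $\hat{\mathcal B}_\psi$ via \eqref{DDD1}, use the Appell--Lauricella spectral equation for $P_\eta^{\gamma-k\mathbf{e}_1}$ (the paper cites \eqref{speceq} with $j=1$, you cite \eqref{speceq2}; these are the same polynomial identity, valid after the shift $\gamma_1\mapsto\gamma_1-k$ exactly as you note), and convert the eigenvalue with \eqref{eq:laid} under the parametrization \eqref{eq:param}. Where you genuinely diverge is the basis statement: the paper argues that $\hat q_{\eta_1,s}^{\alpha,\beta;\psi}(z_1)$ has exact degree $\eta_1$ in $z_1$, deduces linear independence of the $Q_\eta^\gamma$ from the independence of the simplex polynomials, and finishes with a dimension count ($\binom{n+d-1}{d-1}$ polynomials of total degree $n$), whereas you expand the discrete Wronskian \eqref{qcup} along the row of Jacobi polynomials to obtain $Q_\eta^\gamma=\sum_{l=0}^k c_l\,P^\gamma_{(\eta_1-l,\eta_2,\dots,\eta_d)}$, observe that $c_0$ is, up to sign and an index shift, the nonvanishing $\tau$ of \eqref{tau}, and conclude by invertibility of a block-diagonal, triangular change of basis from $\{P_\eta^\gamma\}$. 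Both arguments hinge on the same nonvanishing leading coefficient (degree exactly $\eta_1$ versus $c_0\neq 0$); the paper's route is shorter, while yours is more explicit and yields as a by-product the expansion of $Q_\eta^\gamma$ in the simplex basis (a multivariable analogue of \eqref{conn1}), with the correct convention $p_m^{\alpha+2s,\beta}=0$ for $m<0$ handled as you indicate.
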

\begin{proof}
The linear independence of the polynomials $\{Q_\eta^\gamma (x),\eta\in\mathbb{N}_0^d \}$ follows easily from the independence of the Jacobi polynomials on the simplex and the fact that $\hat{q}_{\eta_1,s}^{\alpha,\beta;\psi} (z_1)$ is a polynomial of degree $\eta_1$ in $z_1$. From \eqref{Qeta} we see that the total degree of the polynomial $Q_\eta^\gamma (x)$ is $|\eta|$. Therefore, the number of $Q_\eta^\gamma (x)$ of total degree $n$ is equal to $\binom{n+d-1}{d-1}$, which coincides with the number of monomials of total degree $n$, thus proving that the polynomials $\{Q_\eta^\gamma (x),\eta\in\mathbb{N}_0^d \}$ form a basis of the space $\Rset[x_1,\dots,x_d]$.

To establish equation~\eqref{eq:Qeta} we use consecutively equations \eqref{qgorros2}, \eqref{DDD1}, \eqref{speceq} with $j=1$, \eqref{eq:laid}, and \eqref{qgorros2} again:
\begin{align*}
\mathcal{B}_f(\hat{D}_1,\hat{D}_{2}) [Q_{\eta}^\gamma(x)]&= \hat{c}^{|\gamma|+d-1}_{\eta_1,k,s}\,
\mathcal{B}_f(\hat{D}_1,\hat{D}_{2})
\hat{\mathcal{B}}_\psi(\hat{D}_1,\hat{D}_2) [ P_\eta^{\gamma-k\mathbf{e}_1}(x)]\\
&= \hat{c}^{|\gamma|+d-1}_{\eta_1,k,s}\,
\hat{\mathcal{B}}_{\psi}(\hat{D}_1,\hat{D}_2)\, f(\mathcal{M}_d^{\gamma-k \mathbf{e}_1}+\lambda_{k/2}^{-|\gamma|-d}) [ P_\eta^{\gamma-k\mathbf{e}_1}(x)]\\
&= \hat{c}^{|\gamma|+d-1}_{\eta_1,k,s}\,
\hat{\mathcal{B}}_{\psi}(\hat{D}_1,\hat{D}_2)\, [f(\lambda_{|\eta|}^{|\gamma|+d-k}+\lambda_{k/2}^{-|\gamma|-d})  P_\eta^{\gamma-k\mathbf{e}_1}(x)]\\
&=f\big(\lambda^{|\gamma|+d}_{|\eta|-k/2}\big)\, \hat{c}^{|\gamma|+d-1}_{\eta_1,k,s}\,\hat{\mathcal{B}}_{\psi}(\hat{D}_1,\hat{D}_2)\, [ P_\eta^{\gamma-k\mathbf{e}_1}(x)]\\
&= f\big(\lambda^{|\gamma|+d}_{|\eta|-k/2}\big)Q_{\eta}^\gamma(x).
\end{align*}
\end{proof}

\begin{remark}
Note that similarly to the Appell-Lauricella equation~\eqref{speceq2}, the eigenvalue in \eqref{eq:Qeta} depends only on the total degree $|\eta|$ of the polynomials $Q_\eta^\gamma (x)$.
\end{remark}

\begin{lemma}\label{le:comm}
For $j\geq 2$, the operator $\mathcal{M}_{j,d}^{\gamma}$ defined in \eqref{diffeq} commutes with the operators in the algebra $\hat{\mathfrak{D}}_\gamma$.
\end{lemma}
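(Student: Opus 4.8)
The plan is to reduce the statement to a pair of generators and then pass to the coordinates $z$ of \eqref{chvar}. By \eqref{eq:ALop} we have $\hat{D}_2=\mathcal{M}_d^\gamma+\hat{D}_1^2+(|\gamma|+d)\hat{D}_1$, so the associative algebra $\hat{\mathfrak{D}}_\gamma=\mathbb{R}\langle\hat{D}_1,\hat{D}_2\rangle$ is equally well generated by $\hat{D}_1$ and by $\mathcal{M}_{1,d}^\gamma=\mathcal{M}_d^\gamma$. It therefore suffices to prove that, for $2\le j\le d$, the operator $\mathcal{M}_{j,d}^\gamma$ commutes with $\hat{D}_1$ and with $\mathcal{M}_{1,d}^\gamma$.

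The commutator $[\mathcal{M}_{j,d}^\gamma,\mathcal{M}_{1,d}^\gamma]$ vanishes for free from the spectral equation \eqref{speceq}: the Jacobi polynomials on the simplex $\{P_\eta^\gamma:\eta\in\Nset_0^d\}$ form a basis of $\Rset[x_1,\dots,x_d]$ and are simultaneous eigenfunctions of $\mathcal{M}_{1,d}^\gamma$ and $\mathcal{M}_{j,d}^\gamma$; since a differential operator with polynomial coefficients is determined by its action on $\Rset[x_1,\dots,x_d]$, the two operators commute. (The same argument shows that the whole family $\{\mathcal{M}_{i,d}^\gamma\}_{i=1}^d$ is commutative.)

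For the commutator with $\hat{D}_1$ I would use the change of variables \eqref{chvar}. On a product $P(x)=p(z_1)q(z_2,\dots,z_d)$, formula \eqref{d1fact} gives $\hat{D}_1[P(x)]=[(z_1-1)\partial_{z_1}p(z_1)]\,q(z_2,\dots,z_d)$; since such products span $\Rset[z_1,\dots,z_d]$, this identifies $\hat{D}_1$, in the $z$-coordinates, with $(z_1-1)\partial_{z_1}$, an operator involving only $z_1$. On the other hand, a direct chain-rule computation using \eqref{chvar}, in which all the factors $1-z_1$ cancel exactly as in the derivation of \eqref{eq:meqm}, shows that for $j\ge 2$ the operator $\mathcal{M}_{j,d}^\gamma(x)$, rewritten in the $z$-coordinates, involves only $z_2,\dots,z_d$ (indeed $\mathcal{M}_{j,d}^\gamma=\mathcal{M}_{j-1,d-1}^{\gamma_2,\dots,\gamma_{d+1}}(z_2,\dots,z_d)$, as can also be read off from \cite{I17}, but only the independence of $z_1$ is needed). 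Differential operators acting on disjoint sets of coordinates commute, so $[\mathcal{M}_{j,d}^\gamma,\hat{D}_1]=0$, and the lemma follows.

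I do not expect a serious obstacle here. The one step requiring genuine, if entirely routine, computation is the chain-rule verification that $\mathcal{M}_{j,d}^\gamma$ loses its dependence on $z_1$ after \eqref{chvar}; alternatively this can be bypassed by invoking the $(d-1)$-variable form of the identities from \cite{I17}. Everything else is either already recorded in the excerpt (\eqref{eq:ALop}, \eqref{speceq}, \eqref{d1fact}) or is the elementary observation that operators in disjoint variables commute.
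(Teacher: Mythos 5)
Your proof is correct and follows essentially the same route as the paper: reduce to the generators, observe that in the $z$-coordinates of \eqref{chvar} the operator $\hat{D}_1$ becomes $(z_1-1)\partial_{z_1}$ while $\mathcal{M}_{j,d}^\gamma$ involves only $z_2,\dots,z_d$, and then handle the second generator through the relation \eqref{eq:ALop} together with $[\mathcal{M}_{j,d}^\gamma,\mathcal{M}_d^\gamma]=0$. The only difference is cosmetic: you trade $\hat{D}_2$ for $\mathcal{M}_d^\gamma$ as a generator and you justify the commutativity $[\mathcal{M}_{j,d}^\gamma,\mathcal{M}_d^\gamma]=0$ by simultaneous diagonalization on the polynomial basis $\{P_\eta^\gamma\}$, a fact the paper simply quotes.
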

\begin{proof}
Since $\hat{\mathfrak{D}}_\gamma$ is generated by the operators $\hat{D}_1$ and $\hat{D}_{2}$ in \eqref{eq:Dh1}-\eqref{eq:Dh2}, it is enough to show that for $j\geq 2$ we have
\begin{align}
[\mathcal{M}_{j,d}^{\gamma},\hat{D}_1]=0,\label{eq:commD1}\\
[\mathcal{M}_{j,d}^{\gamma},\hat{D}_2]=0.\label{eq:commD2}
\end{align}
It is easy to see that under the change of variables \eqref{chvar}, the operator $\hat{D}_1$ becomes $(z_1-1)\pd_{z_1}$, while the operator $\mathcal{M}_{j,d}^{\gamma}$ becomes a partial differential operator in the variables $z_2,\dots,z_d$ with coefficients independent of $z_1$, which proves \eqref{eq:commD1}. We can combine this with equation~\eqref{eq:ALop} and the fact that $\mathcal{M}_{j,d}^{\gamma}$ commutes with $\mathcal{M}_{d}^{\gamma}$ to deduce \eqref{eq:commD2}.
 \end{proof}

\begin{remark}\label{rem:Dar_alg}
From Remark~\ref{rem:ind} we know that
$$\mathcal{M}_{j,d}^{\gamma-k\mathbf{e}_1}=\mathcal{M}_{j,d}^{\gamma}\text{ for }j=2,\dots,d.$$
Combining this with the last Lemma, Proposition~\ref{prop:BhsMulti} and the spectral equations \eqref{speceq}, we see that
\begin{equation}\label{eq:MQ}
\mathcal{M}_{j,d}^\gamma [Q_\eta^\gamma(x)] = - |\boldsymbol{\eta}^j| (|\boldsymbol{\eta}^j| + |\boldsymbol{\gamma}^j| +d+1-j )Q_\eta^\gamma(x), \quad j=2,\dots,d.
\end{equation}
Therefore, if $\hat{\mathfrak{C}}^{\gamma;\psi}$ denotes the commutative algebra generated by the partial differential operators
$$\{{\mathcal{B}}_f(\hat{D}_1,\hat{D}_2): f\in\mathfrak{A}^{\alpha,\beta;\psi}\}\cup\{\mathcal{M}_{j,d}^\gamma, j=2,\dots,d\},$$
then equations \eqref{eq:Qeta} and \eqref{eq:MQ} show that this algebra will act diagonally on the basis $\{Q_\eta^\gamma (x),\eta\in\mathbb{N}_0^d \}$ of $\Rset[x_1,\dots,x_d]$.
Moreover, if we define $\mathfrak{C}^{\gamma;\psi}$ to be the commutative algebra  generated by the partial differential operators
$$\{f(\mathcal{M}_d^{\gamma-k \mathbf{e}_1}+\lambda_{k/2}^{-|\gamma|-d}): f\in\mathfrak{A}^{\alpha,\beta;\psi}\}\cup\{\mathcal{M}_{j,d}^\gamma, j=2,\dots,d\},$$
then Proposition~\ref{pr:multiDarboux} and Lemma~\ref{le:comm} show that
\begin{equation}
\hat{\mathfrak{C}}^{\gamma;\psi}\,\hat{\mathcal{B}}_{\psi}(\hat{D}_1,\hat{D}_{2})=\hat{\mathcal{B}}_{\psi}(\hat{D}_1,\hat{D}_{2})\,\mathfrak{C}^{\gamma;\psi},
\end{equation}
i.e. operator $\hat{\mathcal{B}}_{\psi}(\hat{D}_1,\hat{D}_{2})$ intertwines the commutative algebras  $\mathfrak{C}^{\gamma;\psi}$ and $\hat{\mathfrak{C}}^{\gamma;\psi}$.
\end{remark}

\begin{remark}\label{rem:Int2}
For $f\in\mathfrak{A}^{\alpha,\beta;\psi}$ the operator $\mathcal{B}_f(\hat{D}_1,\hat{D}_{2})$ together with the operators $\mathcal{M}_{j,d}^\gamma$, $j=2,\dots,d$, form a collection of $d$ mutually commuting and algebraically independent partial differential operators, which can be considered as a quantum completely integrable system within the general algebraic framework developed in \cite{BEG}. Note also that the commutative algebra $\hat{\mathfrak{C}}^{\gamma;\psi}$ defined in Remark~\ref{rem:Dar_alg} is not contained in any commutative algebra generated by only $d$ operators, and therefore is supercomplete  \cite{CV}. Finally, one can extend the arguments in Lemma~\ref{le:comm} and show that, for $2\leq i<j\leq d+1$, the operators $\hat{\mathcal{H}}_{i,j}$ defined in \eqref{H2}-\eqref{H3} also commute with the operators in the algebra $\hat{\mathfrak{D}}_\gamma$. This means that, for $2\leq i<j\leq d+1$, the operators $\hat{\mathcal{H}}_{i,j}$ will commute with $\mathcal{B}_f(\hat{D}_1,\hat{D}_{2})$, providing more than $d$ integrals of motion when $d>2$.  Therefore, in view of \cite{MPW} (see also Remark~\ref{rem:Int1}), we can think of this system as a quantum superintegrable system.\end{remark}

\section{Extensions of the Krall polynomials}\label{se4}

\subsection{Discrete Darboux Transformation}\label{ss4.1}
In this section, following \cite{GY02,I11}, we explain how we can pick the functions $\{\psi_n^{(0)}, \dots,\psi_n^{(k-1)} \}$, so that the polynomials $q_n^{\alpha,\beta;\psi}(t)$ defined by \eqref{q} satisfy a second-order recurrence relation in the degree index $n$.

Let $\mathcal{L}_0$ denote the bi-infinite extension of the recurrence operator $L_{\alpha,\beta}(n,E_n)$ in \eqref{eq:rec_op}
\begin{equation*}
\mathcal{L}_0=\mathcal{L}_{\alpha,\beta}(n+\varepsilon,E_n) = A_{n+\varepsilon} E_n+B_{n+\varepsilon} \Id + C_{n+\varepsilon} E_n^{-1},
\end{equation*}
which acts on functions defined on $\Zset$, where $E_n$ is the shift operator,  $A_n$, $B_n$, $C_n$ are defined in \eqref{ABC}, and $\varepsilon$ is a generic parameter such that the coefficients $A_{n+\varepsilon}$ and  $C_{n+\varepsilon}$ are well-defined and nonzero for $n\in\Zset$.
We assume below that the parameters $\alpha,\beta$ and $k$ satisfy the conditions $\alpha>-1$, $\beta\in\mathbb{N}$, and $k\leqslant\beta$. The lattice version of the elementary Darboux transformation amounts to factoring the operator $\mathcal{L}_0$ as a product of two operators and producing a new operator by exchanging the factors.  If we iterate this process $k$ times, we obtain a new operator  $\hat{\mathcal{L}}$ as follows:
\begin{align}
&\mathcal{L}_0=\mathcal{P}_0\mathcal{Q}_0 \mapsto \mathcal{L}_1:=\mathcal{Q}_0\mathcal{P}_0=\mathcal{P}_1\mathcal{Q}_1 \mapsto \dots \nonumber\\
&\qquad \mapsto \mathcal{L}_{k-1}:=\mathcal{Q}_{k-2}\mathcal{P}_{k-2}=\mathcal{P}_{k-1}\mathcal{Q}_{k-1}
\mapsto \hat{\mathcal{L}}\equiv\mathcal{L}_{k}:=\mathcal{Q}_{k-1}\mathcal{P}_{k-1}.\label{eq:dDarboux}
\end{align}
Since $\mathcal{L}_{j} \mathcal{Q}_{j-1} =\mathcal{Q}_{j-1} \mathcal{L}_{j-1}$ for each $j=1,\dots,k$, if we define $\mathcal{Q} =  \mathcal{Q}_{k-1} \dots  \mathcal{Q}_0$ it follows that
\begin{equation}\label{eq:dint}
\hat{\mathcal{L}} \mathcal{Q} = \mathcal{Q} \mathcal{L}_0,
\end{equation}
which is a discrete analog of the intertwining relation \eqref{Darbouxg}.

The sequence of Darboux transformations \eqref{eq:dDarboux} is characterized by choosing a basis $\psi_{n+\varepsilon}^{(0)},\psi_{n+\varepsilon}^{(1)},\dots, \psi_{n+\varepsilon}^{(k-1)}$ of $\ker(\mathcal{Q})$, satisfying
\begin{equation}\label{eq:basisQ}
\begin{split}
&\mathcal{L}_{0}\psi_{n+\varepsilon}^{(0)} = 0,
\\
&\mathcal{L}_{0}\psi_{n+\varepsilon}^{(j)} =  \psi_{n+\varepsilon}^{(j-1)}, \text{ for } j=1,\dots,k-1.
\end{split}
\end{equation}
To construct such a basis, we define the functions
\begin{equation}\label{phi}
\begin{split}
&\phi_n^{1,j}= \frac{(-1)^{n} (n + 1)_j (-n - \alpha - \beta)_j}{j! (1 - \beta)_j},
\\
&\phi_n^{2,j}= \frac{(-1)^{n} (n + 1)_\beta (n + \alpha + 1)_\beta (-n)_j (n + \alpha + \beta + 1)_j}{j!  \beta!(1 + \beta)_j (1 + \alpha)_\beta},
\end{split}
\end{equation}
where $j=0,1,\dots,k-1$. One can show that they are linearly independent and for fixed $i$ they satisfy equations \eqref{eq:basisQ}
\begin{align}
&\mathcal{L}_{0}\big[\phi_{n+\varepsilon}^{i,0}\big] = 0, &&\text{ for } i=1,2,
\\
&\mathcal{L}_{0}\big[\phi_{n+\varepsilon}^{i,j}\big] = \phi_{n+\varepsilon}^{i,j-1}, &&\text{ for } j=1,\dots,k-1, \text{ and } i=1,2.
\end{align}
Therefore, we can define a basis of $\ker(\mathcal{Q})$ satisfying \eqref{eq:basisQ} by setting
\begin{equation}\label{psi}
\psi_n^{(j)}\equiv\psi_n^{(j);\alpha,\beta,a} = \phi_n^{2,j} + \sum_{l=0}^j a_{j-l} \phi_n^{1,l}.
\end{equation}
Note that the basis depends on the parameters $\beta\in\mathbb{N}$, $\alpha>-1$, and $k$ free constants $a=(a_0,\dots,a_{k-1})$.
If we define new polynomials in terms of the operator $\mathcal{Q}$ as follows
\begin{equation}\label{qnew}
q_n^{\alpha,\beta;\psi}(t) = \lim_{\varepsilon\to0}\mathcal{Q} [p_{n+\varepsilon}^{\alpha,\beta}(t)] = \Wr_{n} \big(\psi_n^{(0)},\dots,\psi_n^{(k-1)}, p_n^{\alpha,\beta}(t)\big),
\end{equation}
then, using equations \eqref{eqn} and \eqref{eq:dint} and by considering the limit $\varepsilon\to 0$, we see that
$$
\hat{\mathcal{L}}(n,E_n)q_n^{\alpha,\beta;\psi}(t) = t \, q_n^{\alpha,\beta;\psi}(t).
$$
Therefore, by Favard's theorem, the polynomials $q_n^{\alpha,\beta;\psi}(t) $ are mutually orthogonal with respect to a nondegenerate moment functional.

On the other hand, note that the functions $\phi_n^{i,j}$ defined in \eqref{phi} belong to the space $(-1)^n\Rset[\lambda_n^{\alpha+\beta+1}]$. Thus, condition \eqref{eq:cond_psi} is satisfied and therefore the polynomials $q_n^{\alpha,\beta;\psi}(t)$ are also eigenfunctions of the differential operators described in Theorem~\ref{th1}. Moreover, we can use the techniques developed in Section~\ref{se3} to derive multivariable extensions of these results. In the next subsection we treat in a detail the case $k=1$.

\subsection{An explicit example}\label{ss4.2}
Let us illustrate the results and the constructions with the simplest possible example, by taking $\beta=1$ and by performing only one Darboux transformation, which means that $k=1$.
\subsubsection{One variable ingredients}
Using equations~\eqref{phi} and \eqref{psi} we see that
$$
\psi_n^{(0)} \equiv  \psi_n^{(0);\alpha,1;a_0} =  a_0 \phi_n^{1,0} +\phi_n^{2,0} = (-1)^n \left(a_0 + \frac{(n+1)(n+\alpha+1)}{\alpha+1}\right),
$$
and the polynomials $q_n^{\alpha,1;\psi} (t)$ defined in \eqref{qnew} become
$$
q_n^{\alpha,1;a_0} (t) \equiv  q_n^{\alpha,1;\psi} (t) = \Wr_n\big(\psi_n^{(0)}, p_n^{\alpha,1}(t)\big) = \begin{vmatrix} \psi_n^{(0)} & p_n^{\alpha,1}(t) \\ \psi_{n-1}^{(0)} & p_{n-1}^{\alpha,1}(t)\end{vmatrix}.
$$
One can show that these polynomials satisfy the orthogonality relations
\begin{equation}\label{eq:qorth}
\int_0^1 q_n^{\alpha,1;a_0} (t) q_m^{\alpha,1;a_0} (t) (1-t)^\alpha \mathrm{d}t + \frac{1}{a_0(\alpha+1)} q_n^{\alpha,1;a_0} (0) q_m^{\alpha,1;a_0} (0) = 0, \quad n \neq m.
\end{equation}
With $k=1$, equation \eqref{tau} tells us that
$$\tau_n=a_0 + \frac{(n + 1) (n + \alpha + 1)}{\alpha + 1}.$$
With a similar reasoning as in \cite{I11}, it can be proved that the algebra $\mathfrak{A}^{\alpha,1;a_0}$ is generated by two polynomials of degree $ 2$ and $3$, namely,
$$
\begin{aligned}
f_2(t)=&t^2-\frac{1}{2} (3+4 {a_0}+4 \alpha +4   {a_0}\alpha)t ,
\\
f_3(t)=&t^3 - \frac{1}{4} (1+6 {a_0}+6 \alpha +6   {a_0}\alpha)t^2
\\&
 -\frac{1}{16} (21+12{a_0}+28 \alpha+12 {a_0}\alpha  +4 \alpha ^2 )t.
\end{aligned}
$$
Thus, the algebra $\mathfrak{D}^{\alpha,1;a_0}$ defined in Theorem~\ref{th1} is generated by $\mathcal{B}_{f_2}$ and $\mathcal{B}_{f_3}$, which are differential operators of order $4$ and $6$, respectively. The explicit expression for $\mathcal{B}_{f_2}$ as an element of $\Rset\langle \mathcal{D}_1,\mathcal{D}_2 \rangle$ is
$$\begin{aligned}
\mathcal{B}_{f_2} =&
 \mathcal{D}_1^4-2\mathcal{D}_2\mathcal{D}_1^2+\mathcal{D}_2^2+2(1+\alpha)\mathcal{D}_1^3-2\alpha\mathcal{D}_2\mathcal{D}_1
 \\&+(1+2a_0+3\alpha+2a_0\alpha+\alpha^2)\mathcal{D}_1^2-2(1+a_0+a_0\alpha)\mathcal{D}_2
 \\&+(1+\alpha)(\alpha+2a_0(1+\alpha))\mathcal{D}_1-\frac{1}{16}(3+2\alpha)(3+6\alpha+8a_0(1+\alpha)).
\end{aligned}
$$
A similar expression can be derived for $\mathcal{B}_{f_3}$.

The operator $\Bh_{\psi}(\mathcal{D}_1,\mathcal{D}_2)$ in Proposition~\ref{prop:conn} can be written explicitly as follows
\begin{equation*}
\Bh_{\psi}(\mathcal{D}_1,\mathcal{D}_2)=\frac{1}{(1+\alpha)^2}\big(-\mathcal{D}_2 +\mathcal{D}_1^2+\alpha\mathcal{D}_1 +(1+\alpha)a_0\big),
\end{equation*}
which combined with \eqref{M1D1D2} shows that
\begin{equation}\label{Bhpsik=1}
\Bh_{\psi}(\mathcal{D}_1,\mathcal{D}_2)=-\frac{1}{(1+\alpha)^2}(\mathcal{M}_1^{\alpha,-1}  -(1+\alpha)a_0).
\end{equation}
With this formula, equation~\eqref{eq:conn} reads
\begin{equation*}
q_n^{\alpha,1;a_0} (t)=(-1)^n(2n+\alpha+1)\Bh_{\psi}(\mathcal{D}_1,\mathcal{D}_2)[p_n^{\alpha,0}(t)].
\end{equation*}
The polynomial depending on the parameter $s$, defined in \eqref{qcup} becomes
$$
\hat{q}_{n,s}^{\alpha,1;a_0} (t)= \Wr_n\big(\psi_{n+s}^{(0)}, p_n^{\alpha+2s,1}(t)\big),
$$
and Proposition~\ref{prop:Bhs} yields
\begin{equation*}
\hat{q}_{n,s}^{\alpha,1;a_0} (t)(1-t)^s=(-1)^{n+s}\frac{(2n+2s+\alpha+1)(\alpha+1)}{\alpha+1+2s}\,\Bh_{\psi}(\mathcal{D}_1,\mathcal{D}_{2,s})[p_n^{\alpha,0}(t)(1-t)^s].
\end{equation*}
Observe that
$$
\psi_{n+s}^{(0)} \equiv \psi_{n+s}^{(0);\alpha,1;a_0} = (-1)^s \,\frac{\alpha+2s+1}{\alpha+1} \, \psi_{n}^{(0);\alpha+2s,1;a_0^s},
$$
with $\displaystyle a_0^s=\frac{a_0(1+\alpha)+s(s+\alpha)}{\alpha+2s+1}$, and therefore
\begin{equation*}
\hat{q}_{n,s}^{\alpha,1;a_0}(t)=(-1)^s \, \frac{\alpha+2s+1}{\alpha+1} \, q_n^{\alpha+2s,1;a_0^s}(t).
\end{equation*}
Combining this with \eqref{eq:qorth}, we see that the polynomials $\hat{q}_{n,s}^{\alpha,1;a_0}(t)$ satisfy the orthogonality relation
\begin{equation}\label{orthoqhat}
\begin{aligned}
\left(1+\frac{s(\alpha+s)}{a_0(\alpha+1)}\right) \int_0^1 & \hat{q}_{n,s}^{\alpha,1;a_0}(t)  \hat{q}_{m,s}^{\alpha,1;a_0}(t) (1-t)^{\alpha+2s} \mathrm{d}t
\\
+ & \frac{1}{a_0(\alpha+1)} \hat{q}_{n,s}^{\alpha,1;a_0}(0) \hat{q}_{m,s}^{\alpha,1;a_0}(0) = 0 , \quad n \neq m.
\end{aligned}
\end{equation}

\subsubsection{Multivariable operators and polynomials}

Now, let us take $\gamma=(\gamma_1,\dots,\gamma_{d+1})$ with $\gamma_1=1$, and consider the polynomials  $Q_\eta^\gamma(x)$ defined as in \eqref{Qeta}:
\begin{equation}\label{qex}
Q_\eta^\gamma(x) = \hat{q}_{\eta_1,s}^{\alpha,\beta;a_0} (z_1) (1-z_1)^s P_{\eta_2,\dots,\eta_d}^{\gamma_2,\dots,\gamma_{d+1}}(z_2,\dots,z_d),
\end{equation}
where the variables $x$ and $z$ are related as in \eqref{chvar} and
$$
\begin{aligned}
&s=\eta_2+\dots+\eta_d,
\\
&\alpha=\gamma_2+\dots+\gamma_{d+1}+d-1,
\\
&\beta=\gamma_1=1.
\end{aligned}
$$
The partial differential operator $\hat{\mathcal{B}}_{\psi}(\hat{D}_1,\hat{D}_2)$ can be computed from $\Bh_{\psi}(\mathcal{D}_1,\mathcal{D}_2)$ in \eqref{Bhpsik=1} by applying the map \eqref{map}. With the fixed values of the parameters in this section and applying \eqref{eq:JtoAL} we see that
\begin{equation}\label{Bhpsimv}
\hat{\mathcal{B}}_{\psi}(\hat{D}_1,\hat{D}_2)=-\frac{1}{(|\boldsymbol\gamma^2|+d)^2}(\mathcal{M}_d^{\tilde{\gamma}}
-(|\boldsymbol\gamma^2|+d)a_0),
\end{equation}
where $\mathcal{M}_d^{\tilde{\gamma}} $ is the Appell-Lauricella operator \eqref{diffeq1} with parameters
$$\tilde{\gamma}=(-1,\gamma_2,\gamma_3,\dots,\gamma_{d+1}).$$
Proposition~\ref{prop:BhsMulti} yields
$$Q_\eta^\gamma(x)=\frac{(-1)^{|\eta|+1}(2|\eta|+|\boldsymbol\gamma^2|+d)}{(|\boldsymbol\gamma^2|+d)(|\boldsymbol\gamma^2|+d+2|\boldsymbol\eta^2|)}(\mathcal{M}_d^{\tilde{\gamma}}  -(|\boldsymbol\gamma^2|+d)a_0)[P_\eta^{0,\gamma_2,\gamma_3,\dots,\gamma_{d+1}}(x)].$$

With the above constructions, we know that the spectral equations \eqref{eq:Qeta} and \eqref{eq:MQ} hold. Finally, we show next that the polynomials $Q_\eta^{\gamma}(x)$ satisfy a generalized orthogonality relation which resembles the Sobolev inner products studied in the literature, see the review article \cite{MX2015} and the references therein.

\begin{theorem}
The polynomials \eqref{qex} are mutually orthogonal with respect to the Sobolev inner product
\begin{align}
\langle f,g\rangle  = & \int_{\mathbb{T}^d} f(x)\,g(x)\,x_2^{\gamma_2}\cdots x_d^{\gamma_d} (1-|x|)^{\gamma_{d+1}}\mathrm{d}x
\nonumber\\
&+\frac{1}{a_0 (|\boldsymbol\gamma^2|+d)} \int_{\mathbb{T}^{d-1}} f(0,x_2,\dots,x_d) \, g(0,x_2,\dots,x_d)\nonumber\\
&\hspace{3cm}\times
x_2^{\gamma_2}\cdots x_{d}^{\gamma_d} (1-x_2-\cdots-x_d)^{\gamma_{d+1}}\,\mathrm{d}x_2\cdots \mathrm{d}x_{d}\nonumber\\
&
- \frac{1}{a_0 (|\boldsymbol\gamma^2|+d)} \int_{\mathbb{T}^d} \mathcal{M}_{2,d}^{\gamma}[f(x)] \, g(x) \, x_2^{\gamma_2}\cdots x_d^{\gamma_d} (1-|x|)^{\gamma_{d+1}}\,\mathrm{d}x.\label{eq:Sorth}
\end{align}
\end{theorem}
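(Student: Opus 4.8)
The plan is to reduce the multivariable Sobolev orthogonality to the one-variable orthogonality relation \eqref{orthoqhat} for the polynomials $\hat{q}_{n,s}^{\alpha,1;a_0}(t)$, together with the known orthogonality of the Jacobi polynomials $P_{\eta_2,\dots,\eta_d}^{\gamma_2,\dots,\gamma_{d+1}}$ on the $(d-1)$-dimensional simplex. The main bridge is the change of variables \eqref{chvar}. First I would compute the Jacobian of \eqref{chvar}: one has $\mathrm{d}x = (1-z_1)^{d-1}\,\mathrm{d}z_1\,\mathrm{d}z_2\cdots\mathrm{d}z_d$, and under this substitution the weight $x_2^{\gamma_2}\cdots x_d^{\gamma_d}(1-|x|)^{\gamma_{d+1}}$ factors as $(1-z_1)^{\gamma_2+\cdots+\gamma_{d+1}}$ times $z_2^{\gamma_2}\cdots z_d^{\gamma_d}(1-z_2-\cdots-z_d)^{\gamma_{d+1}}$. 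Writing $\alpha = \gamma_2+\cdots+\gamma_{d+1}+d-1$, the first integral in \eqref{eq:Sorth} applied to $f=Q_\eta^\gamma$, $g=Q_\mu^\gamma$ therefore splits as a product of $\int_0^1 \hat{q}_{\eta_1,s}^{\alpha,1;a_0}(z_1)\hat{q}_{\mu_1,t}^{\alpha,1;a_0}(z_1)(1-z_1)^{\alpha+s+t}\,\mathrm{d}z_1$ (where $s=|\boldsymbol\eta^2|$, $t=|\boldsymbol\mu^2|$) and $\int_{\mathbb{T}^{d-1}} P_{\eta_2,\dots,\eta_d}^{\gamma_2,\dots,\gamma_{d+1}} P_{\mu_2,\dots,\mu_d}^{\gamma_2,\dots,\gamma_{d+1}}\, z_2^{\gamma_2}\cdots(1-z_2-\cdots-z_d)^{\gamma_{d+1}}\,\mathrm{d}z$.

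Next I would handle the two correction terms. The boundary term: setting $x_1=0$ forces $z_1=0$ by \eqref{chvar}, and $Q_\eta^\gamma(0,x_2,\dots,x_d) = \hat{q}_{\eta_1,s}^{\alpha,1;a_0}(0)(1-0)^s P_{\eta_2,\dots,\eta_d}^{\gamma_2,\dots,\gamma_{d+1}}(x_2,\dots,x_d)$, with the variables $x_2,\dots,x_d$ playing directly the role of $z_2,\dots,z_d$ on $\mathbb{T}^{d-1}$; so the second integral in \eqref{eq:Sorth} also factors as $\hat{q}_{\eta_1,s}^{\alpha,1;a_0}(0)\hat{q}_{\mu_1,t}^{\alpha,1;a_0}(0)$ times the same simplex integral of $P^{\gamma_2,\dots,\gamma_{d+1}}$ polynomials. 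For the third term I would use the factorization \eqref{d2fact} of $\hat{D}_2$, or rather its analogue for $\mathcal{M}_{2,d}^\gamma$: by \eqref{eq:meqm}, $\mathcal{M}_{2,d}^\gamma(x) = \mathcal{M}_{d-1}^{\gamma_2,\dots,\gamma_{d+1}}(z_2,\dots,z_d)$, which acts only on the $z_2,\dots,z_d$ variables and annihilates the $z_1$-dependent factor. Hence $\mathcal{M}_{2,d}^\gamma[Q_\eta^\gamma] = \hat{q}_{\eta_1,s}^{\alpha,1;a_0}(z_1)(1-z_1)^s \,\mathcal{M}_{d-1}^{\gamma_2,\dots,\gamma_{d+1}}[P_{\eta_2,\dots,\eta_d}^{\gamma_2,\dots,\gamma_{d+1}}] = \lambda_s^{|\boldsymbol\gamma^2|+d-1}\,Q_\eta^\gamma$ by the spectral equation \eqref{speceq}; since $\lambda_s^{|\boldsymbol\gamma^2|+d-1} = -s(s+\alpha)$, the third integral equals $-\frac{1}{a_0(|\boldsymbol\gamma^2|+d)}\cdot(-s(s+\alpha))$ times the first, i.e.\ it contributes $\frac{s(s+\alpha)}{a_0(\alpha+1)}$ (using $|\boldsymbol\gamma^2|+d=\alpha+1$) times the one-variable $(1-z_1)^{\alpha+s+t}$-weighted integral of the $\hat{q}$'s against the simplex integral of the $P$'s.

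Putting the three pieces together, the whole inner product $\langle Q_\eta^\gamma, Q_\mu^\gamma\rangle$ equals the simplex integral $\int_{\mathbb{T}^{d-1}} P_{\eta_2,\dots,\eta_d}^{\gamma_2,\dots,\gamma_{d+1}} P_{\mu_2,\dots,\mu_d}^{\gamma_2,\dots,\gamma_{d+1}}\,(\cdots)\,\mathrm{d}z$ multiplied by the one-variable bracket
$$
\left(1+\frac{s(\alpha+s)}{a_0(\alpha+1)}\right)\int_0^1 \hat{q}_{\eta_1,s}^{\alpha,1;a_0}(z_1)\,\hat{q}_{\mu_1,t}^{\alpha,1;a_0}(z_1)\,(1-z_1)^{\alpha+s+t}\,\mathrm{d}z_1 + \frac{1}{a_0(\alpha+1)}\hat{q}_{\eta_1,s}^{\alpha,1;a_0}(0)\,\hat{q}_{\mu_1,t}^{\alpha,1;a_0}(0),
$$
valid whenever $s=t$. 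If $s\ne t$, i.e.\ $|\boldsymbol\eta^2|\ne|\boldsymbol\mu^2|$, then the multivariate Jacobi polynomials $P^{\gamma_2,\dots,\gamma_{d+1}}_{\eta_2,\dots,\eta_d}$ and $P^{\gamma_2,\dots,\gamma_{d+1}}_{\mu_2,\dots,\mu_d}$ — if moreover $(\eta_2,\dots,\eta_d)\ne(\mu_2,\dots,\mu_d)$ — are orthogonal on the simplex, killing everything; and if $(\eta_2,\dots,\eta_d)=(\mu_2,\dots,\mu_d)$ but $\eta_1\ne\mu_1$ then $s=t$ after all, so we are in the case $s=t$. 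When $s=t$, the one-variable bracket is exactly the left-hand side of \eqref{orthoqhat} (note $\alpha+2s = \alpha+s+t$ and the constant matches since $1+\frac{s(\alpha+s)}{a_0(\alpha+1)}$ is the same prefactor), hence vanishes whenever $\eta_1\ne\mu_1$. Since $\eta\ne\mu$ forces either $(\eta_2,\dots,\eta_d)\ne(\mu_2,\dots,\mu_d)$ or $\eta_1\ne\mu_1$, in all cases $\langle Q_\eta^\gamma, Q_\mu^\gamma\rangle = 0$, which is the claim. The main obstacle I anticipate is purely bookkeeping: tracking the powers of $(1-z_1)$ and the normalizing constants through the change of variables so that the one-variable bracket lines up \emph{exactly} with \eqref{orthoqhat} rather than up to a nonzero factor — in particular verifying that the $\mathcal{M}_{2,d}^\gamma$ correction produces precisely the extra weight factor $\frac{s(\alpha+s)}{a_0(\alpha+1)}$ appearing in \eqref{orthoqhat}, and that the boundary term's constant $\frac{1}{a_0(|\boldsymbol\gamma^2|+d)}$ coincides with $\frac{1}{a_0(\alpha+1)}$.
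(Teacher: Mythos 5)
Your proposal is correct and follows essentially the same route as the paper's proof: substitute the eigenvalue $\lambda_s^{|\boldsymbol\gamma^2|+d-1}=-s(s+\alpha)$ of $\mathcal{M}_{2,d}^{\gamma}$ on $Q_\eta^\gamma$, factor the inner product through the change of variables \eqref{chvar} into a simplex integral of the $P$'s times a one-variable bracket with weight $(1-z_1)^{\alpha+s+\bar{s}}$, and conclude by simplex orthogonality when the tails differ and by \eqref{orthoqhat} when the tails coincide and $\eta_1\neq\xi_1$. The only cosmetic difference is that you rederive the spectral equation for $\mathcal{M}_{2,d}^{\gamma}$ from \eqref{eq:meqm} and \eqref{speceq} instead of citing \eqref{eq:MQ} directly.
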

\begin{proof}
Let us take $\eta\neq\xi\in\mathbb{N}^d$, $s=\eta_2+\dots+\eta_d$, and $\bar{s}=\xi_2+\dots+\xi_d$.
We want to show that
\begin{equation}\label{eq:pr_morth}
\langle Q_\eta^{\gamma}(x),Q_\xi^{\gamma}(x)\rangle=0.
\end{equation}
From \eqref{eq:MQ} with $j=2$ we know that
$$
\mathcal{M}_{2,d}^{\gamma}(x)[Q_\eta^{\gamma}(x)]=\lambda_s^{|\boldsymbol\gamma^2|+d-1}\, Q_\eta^{\gamma}(x),
$$
where $\lambda_s^{|\boldsymbol\gamma^2|+d-1} = -s(s+|\boldsymbol\gamma^2|+d-1)=-s(s+\alpha)$.
Substituting this into~\eqref{eq:Sorth} and using the definition~\eqref{Qeta} of the polynomials $Q_\eta^{\gamma}(x)$ we can rewrite the inner product as follows:

\begin{align}
\langle Q_\eta^{\gamma}(x)&,Q_\xi^{\gamma}(x)\rangle\nonumber\\
&=
 \int_{\mathbb{T}^{d-1}}  P_{\eta_2,\dots,\eta_d}^{\gamma_2,\dots,\gamma_{d+1}}(z) P_{\xi_2,\dots,\xi_d}^{\gamma_2,\dots,\gamma_{d+1}}(z) z_2^{\gamma_2}\dots z_d^{\gamma_d}(1-|z|)^{\gamma_{d+1}}\,\mathrm{d}z_2\cdots \mathrm{d}z_{d}\nonumber\\
&\times\Bigg(\left(1-\frac{\lambda_s^{|\boldsymbol\gamma^2|+d-1}}{a_0(\alpha+1)} \right)
\int_0^1 \hat{q}_{\eta_1,s}^{\alpha,1;a_0} (z_1) \hat{q}_{\xi_1,\bar{s}}^{\alpha,1;a_0} (z_1) (1-z_1)^{|\boldsymbol\gamma^2|+s+\bar{s}+d-1} \mathrm{d}z_1
\nonumber\\
&\qquad\qquad + \frac{1}{a_0(\alpha+1)}\hat{q}_{\eta_1,s}^{\alpha,1;a_0} (0) \hat{q}_{\xi_1,\bar{s}}^{\alpha,1;a_0} (0)\Bigg).\label{Qetaorth}
\end{align}
There are two possible cases:
\begin{itemize}
\item If $(\eta_2,\dots,\eta_d)\neq(\xi_2,\dots,\xi_d)$, the orthogonality of the simplex polynomials $P_{\eta_2,\dots,\eta_d}^{\gamma_2,\dots,\gamma_{d+1}}(z)$ shows that the second line of \eqref{Qetaorth} is $0$, proving \eqref{eq:pr_morth}.

\item If we assume that $(\eta_2,\dots,\eta_d)=(\xi_2,\dots,\xi_d)$, but $\eta_1\neq\xi_1$, then $\bar{s}=s$. Therefore, $|\boldsymbol\gamma^2|+s+\bar{s}+d-1=|\boldsymbol\gamma^2|+d-1+2s=\alpha+2s$ which shows that the factor in the last two lines of \eqref{Qetaorth} is $0$ by \eqref{orthoqhat}, completing the proof.
\end{itemize}
\end{proof}


\section*{Acknowledgments} We would like to thank a referee for a careful reading of the manuscript and helpful suggestions.



\begin{thebibliography}{99}
\bibitem{AAR} G.~E.~Andrews, R.~Askey  and R.~Roy, {\em Special functions},
Encyclopedia of Mathematics and its Applications \textbf{71}, Cambridge University Press, Cambridge, 1999.

\bibitem{A1881} P.~Appell, {\em Sur des polyn\^omes de deux variables analogues aux  polyn\^omes de Jacobi}, Arch. Math. Phys. \textbf{66} (1881), 238--245.

\bibitem{A1882} P.~Appell, {\em Sur les fonctions hyperg\'eom\'etriques de deux variables}, {J. Math. Pures Appl.} \textbf{8} (1882), 173--216.

\bibitem{BHY} B.~Bakalov, E.~Horozov and M.~Yakimov, {\em General methods for
constructing bispectral operators}, Phys. Lett. A  \textbf{222} (1996), no.~1-2, 59--66.

\bibitem{BEG}  A.~Braverman, P.~Etingof and D.~Gaitsgory, {\em Quantum integrable systems and differential Galois theory}, Transform. Groups \textbf{2} (1997), no. 1, 31--56.

\bibitem{CV} O.~A.~Chalykh and A.~P.~Veselov, {\em Commutative rings of partial differential operators and Lie algebras}, Comm. Math. Phys. \textbf{126} (1990), no. 3, 597--611.

\bibitem{D1882}  G.~Darboux, {\em Sur une proposition relative aux \'equations
lin\'eaires},  C. R. Acad. Sci.  {\bf 94} (1882), 1456--1459.

\bibitem{DGVV} H.~De Bie, V.~X.~Genest, W.~van de Vijver and L.~Vinet, {\em A higher rank Racah algebra and the $\mathbb{Z}_2^n$ Laplace-Dunkl operator},  J. Phys. A \textbf{51} (2018) 025203, 20pp.

\bibitem{DG86} J.~J.~Duistermaat and F.~A.~Gr\"unbaum, {\em Differential equations in the spectral parameter}, Comm. Math. Phys. \textbf{103} (1986),  no. 2, 177--240.

\bibitem{DX14}  C. F. Dunkl and Y. Xu, {\em Orthogonal polynomials of several variables}, second edition, Encyclopedia of Mathematics and its Applications, \textbf{155}, Cambridge University Press, Cambridge, 2014.

\bibitem{G2001} F.~A.~Gr\"unbaum, The bispectral problem: an overview, In: {\em Special functions 2000: current perspective and future directions}, pp. 129--140, NATO Sci. Ser. II Math. Phys. Chem., \textbf{30}, Kluwer Acad. Publ., Dordrecht, 2001.

\bibitem{GY02}  F. A. Gr\"unbaum and M. Yakimov, {\em Discrete bispectral Darboux transformations from Jacobi operators}, {Pacific J. Math.} \textbf{204} (2002), 395--431.

\bibitem{H2001} L.~Haine, The Bochner-Krall problem: some new perspectives, In: {\em Special functions 2000: current perspective and future directions},  pp. 141--178, NATO Sci. Ser. II Math. Phys. Chem., 30, Kluwer Acad. Publ., Dordrecht, 2001.

\bibitem{I11} P. Iliev, {\em Krall-Jacobi commutative algebras of partial differential operators}, {J. Math. Pures Appl.} \textbf{96} (2011), no. 5, 446--461.

\bibitem{I14} P. Iliev,  {\em Bispectral extensions of the Askey-Wilson polynomials,} J. Funct. Anal. \textbf{266} (2014), no. 4, 2294--2318.

\bibitem{I17}  P. Iliev, {\em The generic quantum superintegrable system on the sphere and Racah operators}, Lett. Math. Phys. \textbf{107} (2017),  no. 11, 2029--2045.

\bibitem{I18}  P. Iliev,  {\em Symmetry algebra for the generic superintegrable system on the sphere}, J. High Energy Phys., 2018, no. 2, 44, front matter+22 pp.

\bibitem{KMT}  E.~G.~Kalnins, W.~Miller, Jr. and M.~V.~Tratnik, {\em Families of orthogonal and biorthogonal polynomials on the $N$-sphere},
SIAM J. Math. Anal. \textbf{22} (1991), no. 1, 272--294.

\bibitem{KMP2} E.~G.~Kalnins, W.~Miller Jr. and S.~Post, {\em Two-Variable Wilson Polynomials and the Generic Superintegrable System on the $3$-Sphere}, SIGMA Symmetry Integrability Geom. Methods Appl. \textbf{7} (2011), 051, 26 pages

\bibitem{KS67}  H. L. Krall and I. M. Sheffer, {\em Orthogonal polynomials in two variables}, Ann. Mat. Pura Appl. (4) \textbf{76} (1967), 325--376.

\bibitem{L1893} G.~Lauricella, {\em Sulle funzioni ipergeometriche a pi\`u variabili},   Rend. Circ. Matem.  \textbf{7} (1893), Suppl,  111--158.

\bibitem{MX2015} F.~Marcell\'an and Y.~Xu, {\em On Sobolev orthogonal polynomials}, Expo. Math. \textbf{33} (2015), no. 3, 308--352.

\bibitem{MPW} W.~Miller Jr., S.~Post and P.~Winternitz, {\em Classical and quantum superintegrability with applications}, J. Phys. A \textbf{46} (2013), no. 42, 423001, 97 pp.

\bibitem{Su99}  P. K. Suetin, {\em Orthogonal polynomials in two variables}, translated from the 1988 Russian original by E. V. Pankratiev, Analytical Methods and Special Functions \textbf{3}, Gordon and Breach Science Publishers, Amsterdam, 1999.
\end{thebibliography}
\end{document}